%
%
%

\documentclass[graybox]{svmult}


\usepackage{mathptmx}       
\usepackage{helvet}         
\usepackage{courier}        
\usepackage{type1cm}        
%
\usepackage{makeidx}         
\usepackage{graphicx}        
\usepackage{multicol}        
\usepackage[bottom]{footmisc}


\makeindex             



\newcommand{\norm}[1]{\left\Vert#1\right\Vert}
\newcommand{\abs}[1]{\left\vert#1\right\vert}

\newcommand{\R}{\mathbf{R}}
\newcommand{\Z}{\mathbf{Z}}
\newcommand{\N}{\mathbf{N}}
\newcommand{\C}{\mathbf{C}}
\newcommand{\T}{\mathbf{T}}

\newcommand{\Q}{\mathbf{Q}}


\newcommand{\spec}{\mbox{Spec}}


\newcommand{\U}{{\mathcal U}}

\newcommand{\Emb}{\mbox{Emb}}

\newcommand{\Hom}{\mbox{Hom}}
\newcommand{\Fix}{\mbox{Fix}}

\begin{document}

\title*{Discrete Symmetric Planar Dynamics}
\author{B.\ Alarc\'on, S. B.\ S.\ D.\ Castro and I.\ S.\ Labouriau }
\institute{B.\ Alarc\'on \at Departamento de Matem\'aticas de la Universidad de Oviedo; Avda. Calvo Sotelo s/n; 33007 Oviedo; Spain. \email{alarconbegona@uniovi.es}
\and S. B.\ S.\ D.\ Castro \at Centro de Matem\'atica da Universidade do Porto and
Faculdade de Economia do Porto, Rua Dr. Roberto Frias, 4200-464 Porto, Portugal. \email{sdcastro@fep.up.pt}
\and  I.\ S.\ Labouriau  \at Centro de Matem\'atica da Universidade do Porto, Rua do Campo Alegre 687, 4169-007 Porto, Portugal. \email{islabour@fc.up.pt}}
%
%
\maketitle

\abstract*{We review previous results providing sufficient conditions to determine the global dynamics for equivariant maps of the plane with a unique fixed point which is also hyperbolic.
}

\abstract{We review previous results providing sufficient conditions to determine the global dynamics for equivariant maps of the plane with a unique fixed point which is also hyperbolic.
}

\section{Introduction}
The Discrete Markus-Yamabe Question  is a problem concerning discrete   dynamics, formulated in dimension $n$ by
Cima {\em et al.} \cite{Cima-Manosa} as follows:
\medbreak
\noindent{\em
{\bf [DMYQ($n$)]}
Let $f:\R^n\longrightarrow\R^n$ be a $C^1$ map such that $f(0)=0$ and for any $x\in\R^n$, $ Jf(x)$ has all its eigenvalues with modulus less than one.
Is it true that $0$ is a global attractor for the discrete dynamical system generated by $f$?
}
\medbreak

It is known that the answer  is affirmative in dimension $1$ and there are counter-examples  for dimensions higher than 2, see Cima {\em et al.} \cite{CvdEGHM} and van den Essen and Hubbers \cite{vdEH}.

In dimension 2, Cima {\em et al.} \cite{Cima-Manosa} prove that an affirmative answer is obtained when $f$ is a polynomial map, and provide a counter example which is a rational map.
After this, research on planar maps focused on the quest for minimal sufficient conditions under which the DMYQ has an affirmative answer.
Alarc\'on  {\em et al.} \cite{Alarcon} use the existence of an invariant embedded curve joining the origin to infinity to show the global stability of the origin.
Symmetry is a natural context for the existence of such a curve, and this led us to a symmetric approach to this problem and to the results in  \cite{AlarconDenjoy,SofisbeSzlenk,SofisbePolynomial,SofisbeGlobal,SofisbeSaddle} that we review in this article.

The present article studies maps $f$ of the plane which preserve symmetries described by the action of a compact Lie group.
In this setting we characterise the possible local dynamics near the unique fixed point of $f$, that we assume hyperbolic.
We establish for which symmetry groups local dynamics extends  globally.
For the remaining groups we present illustrative examples.

\section{Preliminaries} \label{secPre}

This section consists of definitions and known results about topological dynamics and equivariant theory. These are grouped in two separate subsections, which are elementary for readers in each field,
containing material from the corresponding sections of \cite{AlarconDenjoy,SofisbeSzlenk,SofisbePolynomial,SofisbeGlobal,SofisbeSaddle} and is included here for ease of reference.

\subsection{Topological Dynamics}

We consider planar topological embeddings, that is, continuous and injective maps defined in $\R^ 2$. The set of topological embeddings of the plane is denoted by $\Emb(\R^2)$.

Recall that for $f\in \Emb(\R^2)$ the equality $f(\R^2)=\R^2$ may not hold.
Since every map $f\in \Emb(\R^2)$ is open (see \cite{libroembeddings}), we will say that $f$ is a homeomorphism if $f$ is a topological embedding defined onto $\R^2$.
The set of homeomorphisms of the plane will be denoted by $\Hom(\R^2)$.
When $\mathcal{H}$ is  one of these sets  we denote by $\mathcal{H}^ {+}$ (and $\mathcal{H}^ {-}$) the subset of orientation preserving (reversing) elements of $\mathcal{H}$.

We denote by $\Fix(f)$  the set of  fixed points of  a continuous map $f: \R^2 \to \R^2$.

Let $\omega(p)$ be the set of points $q$ for which there is a sequence $n_j\to+\infty$ such that $f^{n_j}(q)\to p$.
If $f\in Hom(\R^2)$ then $\alpha(p)$ denotes the set $\omega(p)$ under $f^{-1}$.

Let $f\in \Emb(\R^2)$ and $p\in \R^2$.  We say that $\omega(p)=\infty$ if $\norm{f^n(p)}\to \infty$ as $n$ goes to $ \infty$. Analogously, if $f\in \Hom(\R^2)$, we say that $\alpha(p)=\infty$ if $\norm{f^{-n}(p)}\to \infty$ as $n$ goes to $ \infty$.


We say that a map $f\in \Emb(\R^2)$ is \emph{dissipative} if there exists a compact set $W\subset \R^2$ that is positively invariant and attracts uniformly all compact sets. This means that $f(W)\subset W$ and for each $x\in \R^2$,
$$
dist(f^n(x),W)\to 0, \quad \mbox{ as } \; n\to \infty
$$
uniformly on balls $\abs{x}\leq r$, $r>0$. Observe that in the case $f\in \Hom(\R^2)$ the dissipativity of $h$ means that $\infty$ is asymptotically stable for $f^{-1}$.

We say that $0\in \Fix(f)$ is a \emph{local attractor} if its basin of attraction $\U=\{p\in \R^ 2 : \omega(p)=\{0\}\}$ contains an open neighbourhood of $0$ in $\R^2$ and that $0$ is a \emph{global attractor} if $\U=\R^2$. The origin is a \emph{stable fixed point} if for every neighborhood $U$ of $0$ there exists another neighborhood $V$ of $0$ such that $f(V)\subset V$ and $f(V)\subset U$. Therefore, the origin is an \emph{asymptotically local (global) attractor} or a \emph{(globally) asymptotically stable fixed point} if it is a stable local (global) attractor. See \cite{Bhatia} for examples.

We say that $0\in \Fix(f)$ is a \emph{local repellor} if there exists a neighbourhood $V$ of $0$ such that $\omega(p)\notin V$ for all $0\neq p\in \R^ 2$ and a \emph{global repellor} if this holds for $V=\R^2$.

 We say that the origin is an \emph{asymptotically global repellor} if it is a  global repellor
 and, moreover, if for any neighbourhood $U$ of $0$ there exists another neighbourhood $V$ of $0$, such that,
 $V\subset f(V)$ and $V\subset f(U)$.

When the origin is a fixed point of a $C^ 1$-map of the plane we say the origin is a \emph{local saddle} if the two eigenvalues of $Df_0$, $\alpha, \beta$, are both real and verify $0<\abs{\alpha}<1<\abs{\beta}$. In case the two eigenvalues are strictly positive we say the origin is a direct saddle.
%
%
%
%
We say that the origin is a {\em global (topological) saddle} for a $C^1-$homeomorphism if additionally
its stable and unstable manifolds
$W^{s}(0,f)$, $W^{u}(0,f)$ are unbounded sets that do not accumulate on each other, except at $0$ and $\infty$,
and such that $$\R^2 \setminus (W^{s}\cup W^{u} \cup \{\infty\}) = U_1 \cup U_2 \cup U_3 \cup U_4,$$ where for all $i=1,...,4$ $\; U_i\subset \R^2$ is open connected and homeomorphic to $\R^2$ verifying:

\begin{itemize}
\item[i)] either $f(U_i)=U_i$ or there exists an involution $\varphi:\R^2 \to \R^2$ such that $(f \circ \varphi) (U_i)=U_i$
\item[ii)] for all $p\in U_i$ both $\norm{f^n(p)}\to \infty$ and $\norm{f^{-n}(p)}\to \infty$ as $n$ goes to $\infty$.
\end{itemize}

We say that $f\in \Emb(\R^2)$ has \emph{trivial dynamics} if  $\omega(p) \subset \Fix(f)$, for all $p\in \R^ 2$. Moreover, we say that a planar homeomorphism has trivial dynamics if both $\omega(p), \alpha(p) \subset \Fix(f)$, for all $p\in \R^ 2$.

Let $f: \R^N \rightarrow \R^N$ be a continuous map.
Let $\gamma : [0,\infty) \to \R^2$  be a topological
embedding of $[0,\infty) \,. \;$  As usual, we identify $\gamma$
with $\gamma\,([0,\infty))\,.$ We will say that $\gamma$ is an \emph{
$f$-invariant ray} if $\, \gamma(0)=(0,0)\,, \,$ $\,f(\gamma)\subset
\gamma \,, \,$ and $\lim_{t\to\infty}|\gamma(t)|=\infty$, where
$|\cdot| \,$ denotes the usual Euclidean norm.

\begin{proposition}[Alarc\'on {\em et al.} \cite{Alarcon}] \label{lemrayo} Let $f\in \Emb^ {+}(\R^2)$ be such that $\Fix(f)=\{0\}$. If there exists an $f$-invariant ray $\gamma$, then $f$ has trivial dynamics.
\end{proposition}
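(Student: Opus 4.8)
\emph{Overview.} The plan is to cut the plane along $\gamma$, thereby removing both the fixed point $0$ and the point at infinity, and then to invoke the Brouwer theory of fixed-point-free orientation-preserving planar embeddings. Recall that ``trivial dynamics'' here means $\omega(p)\subseteq\Fix(f)=\{0\}$ for every $p$, i.e.\ no orbit has a nonzero $\omega$-limit point.

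\emph{Reduction to a topological disk.} Since $|\gamma(t)|\to\infty$, the image $\gamma=\gamma([0,\infty))$ is closed in $\R^2$, and in $S^2=\R^2\cup\{\infty\}$ the set $\widehat\gamma=\gamma\cup\{\infty\}$ is an arc joining $0$ to $\infty$; hence $V:=\R^2\setminus\gamma=S^2\setminus\widehat\gamma$ is an open, connected, simply connected subset of the plane, i.e.\ homeomorphic to $\R^2$. Parametrising $\gamma$ by $[0,\infty)$ with $\gamma(0)=0$, the inclusion $f(\gamma)\subseteq\gamma$ together with continuity and injectivity of $f$ yields a continuous injection $g\colon[0,\infty)\to[0,\infty)$ with $f(\gamma(t))=\gamma(g(t))$ and $g(0)=0$; such a map is strictly increasing, and since $\Fix(f)=\{0\}$ it has no fixed point in $(0,\infty)$, so either $g(t)>t$ for all $t>0$ or $g(t)<t$ for all $t>0$. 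In the first case $|f^n(\gamma(t))|\to\infty$ and $g$ is onto, so $f(\gamma)=\gamma$; in the second, $g^n(t)\downarrow 0$, so $f^n(\gamma(t))\to 0$. In either case $\omega(p)\subseteq\{0\}$ for every $p\in\gamma$, and the same holds for any $p\notin\gamma$ whose forward orbit meets $\gamma$. Thus it remains to treat $p\in V$ whose entire forward orbit lies in $V$. When $f(\gamma)=\gamma$, injectivity of $f$ forces $f^{-1}(\gamma)=\gamma$, hence $f(V)=f(\R^2)\setminus\gamma\subseteq V$, so $f|_V$ is an orientation-preserving embedding of the disk $V$ with no fixed point; the remaining subcase (where $g(t)<t$ and $f(\gamma)$ is a proper bounded sub-arc of $\gamma$) requires a similar but more careful choice of $f$-invariant domain inside $V$.

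\emph{Ruling out recurrence.} Suppose, for contradiction, that $q\in\omega(p)$ with $q\neq 0$. Since the forward orbit of $p$ returns to every neighbourhood of $q$, the point $q$ is non-wandering for $f$, and $f^k(q)\in\omega(p)$ for all $k\geq 0$ because $\omega(p)$ is forward $f$-invariant. If $q\in V$, then $q$ is a non-wandering point of the fixed-point-free orientation-preserving embedding $f|_V$ of the topological plane $V$; by the Brouwer translation-arc lemma --- a fixed-point-free orientation-preserving planar embedding has no non-wandering points --- this is impossible. If instead $q\in\gamma\setminus\{0\}$, one passes to the prime-end compactification $\overline V$ of $V$: $f|_V$ extends to a homeomorphism of the closed disk $\overline V$ that is fixed-point-free on the interior, the accumulation of the orbit of $p$ at $q$ produces a recurrent prime end, and Brouwer theory applied to the induced boundary circle homeomorphism again yields a contradiction, the only admissible (fixed) boundary behaviour corresponding to the ends $0$ and $\infty$ of $\gamma$.

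\emph{Expected obstacle.} The soft topology --- that $V$ is a disk, and that orbits lying on or eventually meeting $\gamma$ are trivial --- is routine. The crux is the last step: controlling how the forward orbit of $p$ may approach the boundary ray $\gamma$ (the case $q\in\gamma$, and the subcase in which $f$ fails to be onto $V$), which is exactly where one must upgrade Brouwer's plane translation theorem from homeomorphisms of $\R^2$ to embeddings and transport it to the prime-end compactification of $V$.
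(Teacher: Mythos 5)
The paper itself offers no proof of Proposition~\ref{lemrayo}: it is imported verbatim from Alarc\'on--Gu\'{\i}\~nez--Gutierrez \cite{Alarcon}, so your attempt can only be measured against the strategy of that source. Your opening moves are the right ones and match it in spirit: the analysis of the induced map $g$ on the ray is correct (a continuous injection of $[0,\infty)$ fixing $0$ is strictly increasing, has no interior fixed points, hence pushes every point of $\gamma$ either to $0$ or to infinity), and the key tool --- that a fixed-point-free orientation-preserving \emph{embedding} of the plane has no nonwandering points, hence empty $\omega$-limit sets --- is exactly the Brouwer-type result the argument must rest on. But the two places you defer as ``expected obstacles'' are precisely where the theorem lives, and your sketched resolutions do not work as stated. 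First, the subcase where $V=\R^2\setminus\gamma$ is not $f$-invariant: this occurs exactly when $L:=\lim_{t\to\infty}g(t)<\infty$ (note your claim that $g(t)<t$ for all $t>0$ forces $f(\gamma)$ to be a proper bounded sub-arc is false --- take $g(t)=t/2$, which is onto, so $f(\gamma)=\gamma$ and one is back in the good case); when $L<\infty$, points of $V$ may map onto $\gamma([L,\infty))$, the set of points whose whole forward orbit avoids $\gamma$ need not be connected or simply connected, and Brouwer's lemma applied to $f$ on all of $\R^2$ only produces \emph{a} fixed point --- which already exists at $0$ --- so no contradiction follows without a localized (Franks-type) version and a genuine argument that the disk chain can be kept away from the origin. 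You give no such argument.

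Second, the boundary case $q\in\gamma\setminus\{0\}$. The prime-end route is flawed on two counts: the induced map on prime ends is only defined for homeomorphisms of $V$ onto itself (the paper's own prime-end machinery is set up for ``admissible homeomorphisms''), whereas $f|_V$ is merely an embedding, possibly not onto; and Euclidean accumulation of an orbit at a boundary point of $V$ does not produce a nonwandering prime end --- relating the two is a well-known delicate point, and in any case a circle homeomorphism may perfectly well have recurrent points, so ``Brouwer theory on the boundary circle'' is not by itself a contradiction. There is a much more elementary fix, which you should use instead: once Brouwer gives $\omega(p)\cap V=\emptyset$ (in the invariant case), suppose $q=\gamma(t_0)\in\omega(p)$ with $t_0>0$; since $f(q)\neq q$, choose a small closed disk $D$ around $q$ with $f(D)\cap D=\emptyset$ and $0\notin D$; the arc $\gamma$ cuts $D$ into two open half-disks, each contained in $V$; the orbit of $p$ lies in $V$ and returns to $D$ infinitely often, hence returns infinitely often to one half-disk $D^+$, which is then a free disk for the fixed-point-free embedding $f|_V$ of the plane $V$ with $f^n(D^+)\cap D^+\neq\emptyset$ for some $n\geq 2$ --- contradicting the free-disk form of Brouwer's lemma. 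This disposes of the boundary case without prime ends. As it stands, then, your proposal is a correct outline with the right tools but with genuine, unresolved gaps at the two critical steps.
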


\begin{corollary} Let $f\in \Hom^{+}(\R^2)$  be such that $\Fix(f)=\{0\}$. If there exists an $f$-invariant ray $\gamma$, then for each $p\in\R^2$, as $n$ goes to $\pm \infty$, either $f^n(p)$ goes to $0$ or $\norm{f^n(p)}\to \infty$.
\end{corollary}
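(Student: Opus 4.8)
The plan is to handle the two time directions separately, reducing each to Proposition~\ref{lemrayo} — applied to $f$ for $n\to+\infty$ and to $f^{-1}$ for $n\to-\infty$ — and then to upgrade the resulting inclusion $\omega(p)\subset\{0\}$ to the stated dichotomy.

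The first thing to check is that the hypothesis is symmetric under $f\leftrightarrow f^{-1}$, i.e.\ that $\gamma$ is automatically $f^{-1}$-invariant as well. Since $|\gamma(t)|\to\infty$, the map $\gamma$ is a proper embedding, so its image is a closed, connected, unbounded subset of $\R^2$ containing $0$. As $f$ is a homeomorphism, $f(\gamma)$ is again closed (homeomorphisms are closed maps), connected and unbounded, and $f(\gamma)\subset\gamma$ by hypothesis. Pulling this back through the homeomorphism $\gamma\colon[0,\infty)\to\gamma$, the set $f(\gamma)$ corresponds to a closed, connected, unbounded subset of $[0,\infty)$ containing $0$, and the only such subset is $[0,\infty)$ itself; hence $f(\gamma)=\gamma$, so $\gamma$ is also an $f^{-1}$-invariant ray. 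Thus Proposition~\ref{lemrayo} applies both to $f$ and to $f^{-1}$ (both lie in $\Emb^{+}(\R^2)$ and have $0$ as their only fixed point), giving, for every $p\in\R^2$, that $\omega(p)\subset\{0\}$ and $\alpha(p)\subset\{0\}$.

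Now fix $p$ and examine the forward orbit; the backward case is identical with $f$ replaced by $f^{-1}$. If $\omega(p)=\emptyset$ the orbit has no finite accumulation point, which is precisely $\norm{f^n(p)}\to\infty$. If $\omega(p)=\{0\}$ I would show the orbit is bounded, and then a bounded orbit whose sole accumulation point is $0$ necessarily converges to $0$. For boundedness, fix $\varepsilon>0$ small enough that $f\bigl(\overline{B_\varepsilon(0)}\bigr)\subset B_{1/\varepsilon}(0)$, which is possible since $f(0)=0$ and $f$ is continuous. Because $\omega(p)=\{0\}$, the orbit eventually stays out of the compact annulus $\{\varepsilon\le\abs{x}\le1/\varepsilon\}$, so from some index on every $f^n(p)$ lies in $B_\varepsilon(0)$ or in $\{\abs{x}>1/\varepsilon\}$. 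If the orbit were unbounded it would enter the outer region infinitely often while (since $0\in\omega(p)$) also returning to $B_\varepsilon(0)$ infinitely often; so there would be an $n$ with $f^n(p)\in B_\varepsilon(0)$ and $\abs{f^{n+1}(p)}>1/\varepsilon$, contradicting $f^{n+1}(p)=f(f^n(p))\in f\bigl(\overline{B_\varepsilon(0)}\bigr)\subset B_{1/\varepsilon}(0)$.

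The step I expect to need the most care is the last one — turning the qualitative conclusion $\omega(p)\subset\{0\}$ of Proposition~\ref{lemrayo} into a genuine dichotomy, i.e.\ excluding an orbit that accumulates at $0$ and at infinity simultaneously. The decisive point is simply that a continuous map fixing $0$ cannot push a sufficiently small neighbourhood of $0$ beyond distance $1/\varepsilon$ in one iterate, which is what forbids the oscillation; the remaining ingredients (continuity of $f$, compactness of annuli, and the classification of closed connected subsets of $[0,\infty)$ used to get $f(\gamma)=\gamma$) are routine.
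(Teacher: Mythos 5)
Your proof is correct and follows exactly the route the paper intends: the corollary is stated without proof as an immediate consequence of Proposition~1, to be obtained by applying that proposition to $f$ and to $f^{-1}$. Your write-up supplies the two details the paper leaves implicit --- that $f(\gamma)\subset\gamma$ together with properness of $\gamma$ and of $f$ forces $f(\gamma)=\gamma$, so the ray is also $f^{-1}$-invariant, and that the conclusion $\omega(p)\subset\{0\}$ upgrades to the convergence dichotomy because a small ball around the fixed origin cannot be mapped past a large circle in one iterate --- and both are argued soundly.
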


In order to explain the construction of examples in Section \ref{secDenjoy} we need to introduce the concept of prime end.

We say that $f:\R^2 \to \R^2$ is an \emph{admissible homeomorphism} if
$f$ is orientation preserving, dissipative and has an
asymptotically stable fixed point with proper and unbounded basin
of attraction $U\subset \R^2$. Note that $U$ is non empty, so the
proper condition follows when the fixed point is not a global
attractor. Since $f(U)=U$, we can obtain automatically the
unboundedness condition if we suppose that $f$ is area
contracting.

Let $f:\R^2 \to \R^2$ be an admissible homeomorphism and consider
 the compactification of $f$ to the Riemann sphere $f:\mathbf{S}^2 \to \mathbf{S}^2$.
 Hence $U\subset \mathbf{S}^2=\R^2 \cup
\{\infty\}$. A \emph{crosscut} $C$ of $U$ is an arc homeomorphic
to the segment $[0,1]$ such that $a,b\notin U$ and $\dot{C}= C
\setminus \{a,b\} \subset U$, where $a$ and $b$ are the extremes of
$C$. Every crosscut divides $U$ into two connected components
homeomorphic to the open disk $\D=\{z\in \C : \abs{z}<1\}$.

Let $x^*$ be a point in $U$. For convenience we will consider only
the crosscut such that $x^* \notin C$. We denote by $D(C)$ the
component of $U\setminus C$ that does not contain $x^*$. A
\emph{null-chain} is a sequence of pairwise disjoint crosscuts
$\{C_n\}_{n\in \N}$ such that $$\lim_{n\to \infty}
\mbox{diam}(C_n)=0 \mbox{ and } D(C_{n+1})\subset D(C_n),$$ where
$\mbox{diam}(C_n)$ is the diameter of $C_n$ on the Riemann sphere.

Two \emph{null-chains} are \emph{equivalent} $\{C_n\}_{n\in \N}
\sim\{C_n^*\}_{n\in \N}$ if given $m\in \N$
$$D(C_n)\subset D(C_m^*) \mbox{ and } D(C_n^*)\subset D(C_m),$$
for $n$ large enough. A \emph{prime end} is defined as a class of
equivalence of a null-chain and the space of prime ends is
$$\mathbf{P}=\mathbf{P}(U)=\mathcal{C}/\sim,$$
where $\mathcal{C}$ is the set of all
null-chains of $U$.

The disjoint union $U^*=U\cup \mathbf{P}$ is a topological space
homeomorphic to the closed disk $\bar{\D}=\{z\in \C :
\abs{z}\leq1\}$ such that its boundary is precisely $\mathbf{P}$.

It is well studied in \cite{pommerenke} that the Theory of Prime
Ends implies that an admissible homeomorphism $f$ induces an
orientation preserving homeomorphism $f^{*}:\mathbf{P} \to
\mathbf{P}$ in the space of prime ends. This topological space is
homeomorphic to the circle, that is $\mathbf{P} \simeq \T$, and
hence the rotation number of $f^{*}$ is well defined, say $\bar
\rho \in \T$. The \emph{rotation number} for an admissible
homeomorphism is defined by $\rho(f)=\bar \rho$.

\subsection{Equivariant Theory}

Let $\Gamma$ be a compact Lie group acting on $\R^2$, that is, a group which has the structure of a compact $C^{\infty}$-differentiable manifold such that the map $\Gamma \times \Gamma \to \Gamma$, $(x,y)\mapsto xy^{-1}$ is of class $C^{\infty}$. The folllowing definitions and results are taken from Golubitsky {\em et al.} \cite{golu2},  especially Chapter~XII,  to which we refer the reader interested in further detail.

We think of a group mostly through its action or representation on $\R^2$.
A {\em linear action} of $\Gamma$ on $\R^2$ is a continuous mapping
\begin{eqnarray*}
\Gamma \times \R^2 & \rightarrow & \R^2 \\
(\gamma , p ) & \mapsto & \gamma p
\end{eqnarray*}
such that, for each $\gamma \in \Gamma$ the mapping $\rho_{\gamma}$ that takes $p$ to $\gamma p$ is linear and, given $\gamma_1, \gamma_2 \in \Gamma$, we have $\gamma_1(\gamma_2 p)= (\gamma_1 \gamma_2) p$.
Furthermore the identity in $\Gamma$ fixes every point. The mapping $\gamma\mapsto \rho_{\gamma}$ is called the {\em representation} of $\Gamma$ and describes how each element of $\Gamma$ transforms the plane.

We consider only standard group actions and representations.
A representation of a group $\Gamma $ on a vector space $V$ is {\em absolutely irreducible} if the only linear mappings on $V$ that commute with $\Gamma $ are scalar multiples of the identity.

Given a map $f:\R^2\longrightarrow\R^2$,
we say that $\gamma \in \Gamma$ is a \emph{symmetry} of $f$ if $f(\gamma x)=\gamma f(x)$.
We define the \emph{symmetry group} of $f$ as the biggest closed subset of $GL(2)$ containing all the symmetries of $f$. It will be denoted by $\Gamma_f$.

We say that $f:\R^2\to \R^2$ is  \emph{$\Gamma$-equivariant} or that $f$ {\em commutes} with $\Gamma$ if
$$
f(\gamma x)=\gamma f(x) \quad \mbox{ for all }\quad \gamma \in \Gamma.
$$
It follows that
every map $f:\R^2\to \R^2$ is equivariant under the action of its symmetry group, that is, $f$ is $\Gamma_f$-equivariant.

Let  $\Sigma$ be a subgroup of $\Gamma$. The {\em fixed-point subspace} of $\Sigma$ is
$$
\Fix (\Sigma) =\{p\in \R^2: \sigma p=p \; \mbox{ for all } \; \sigma \in \Sigma\}.
$$ If $\Sigma$ is generated by a single element $\sigma \in \Gamma$, we write \emph{$\Fix\langle\sigma\rangle$}.

We note that, for each subgroup $\Sigma$ of $\Gamma$, $\Fix (\Sigma)$ is invariant by the dynamics of a $\Gamma$-equivariant map (\cite{golu2}, XIII, Lemma 2.1).

When $f$ is $\Gamma$-equivariant, we can use the symmetry to generalize information obtained for a particular point. This is achieved through the {\em group orbit} $\Gamma x$ of a point $x$, which is defined to be
$$
\Gamma x = \{ \gamma x: \; \; \gamma \in \Gamma\}.
$$

\begin{lemma}
Let $f:\R^2\to \R^2$ be $\Gamma$-equivariant and let $p$ be a fixed point of $f$. Then all points in the group orbit of $p$ are fixed points of $f$.
\end{lemma}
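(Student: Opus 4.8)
The plan is to unwind the definitions: a point of the group orbit $\Gamma p$ is by definition of the form $\gamma p$ for some $\gamma\in\Gamma$, and we must verify $f(\gamma p)=\gamma p$. The natural move is to push $\gamma$ through $f$ using $\Gamma$-equivariance, which gives $f(\gamma p)=\gamma f(p)$, and then invoke the hypothesis that $p$ is fixed, i.e. $f(p)=p$, to obtain $\gamma f(p)=\gamma p$. Chaining these two equalities yields $f(\gamma p)=\gamma p$, so $\gamma p\in\Fix(f)$.

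More precisely, I would fix an arbitrary $q\in\Gamma p$ and choose $\gamma\in\Gamma$ with $q=\gamma p$. Since $f$ commutes with $\Gamma$, the equivariance relation $f(\gamma x)=\gamma f(x)$ holds for every $x\in\R^2$, in particular for $x=p$, giving $f(q)=f(\gamma p)=\gamma f(p)$. Using $f(p)=p$ we get $f(q)=\gamma p=q$. As $q$ was an arbitrary element of $\Gamma p$, every point of the group orbit of $p$ is a fixed point of $f$.

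There is essentially no obstacle here: the statement is an immediate consequence of the definition of $\Gamma$-equivariance together with the definition of the group orbit, and in fact it can be viewed as the special case of the general fact that $\Fix(f)$ is a $\Gamma$-invariant set, or of the observation recalled earlier that $\Fix(\Sigma)$ is invariant under the dynamics of a $\Gamma$-equivariant map. The only point worth stating explicitly is that the argument uses nothing about $\Gamma$ beyond it acting on $\R^2$ and $f$ commuting with that action; no compactness, continuity, or linearity of the representation is needed.
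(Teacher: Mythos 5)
Your proof is correct and is essentially identical to the paper's: both compute $f(\gamma p)=\gamma f(p)=\gamma p$ directly from equivariance and the hypothesis $f(p)=p$. Your closing remarks on the generality of the argument are accurate but add nothing beyond the paper's one-line computation.
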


\begin{proof}
If $f(p)=p$ it follows that $f(\gamma p) = \gamma f(p) = \gamma p$, showing that $\gamma p$ is a fixed point of $f$ for all $\gamma \in \Gamma$.
\end{proof}

The relation between the group action and the Jacobian matrix of an equivariant map $f$ is obtained through the following

\begin{lemma}\label{LemmaJacobian}
Let $f: \; V \rightarrow V$ be a $\Gamma$-equivariant map differentiable at the origin. Then $Df(0)$, the Jacobian of $f$ at the origin, commutes with $\Gamma$.
\end{lemma}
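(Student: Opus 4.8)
The plan is to differentiate the equivariance identity at the origin and read off the conclusion from the chain rule. Recall that the action of $\Gamma$ on $V$ is linear: for each $\gamma\in\Gamma$ the map $\rho_\gamma:V\to V$, $\rho_\gamma(x)=\gamma x$, is linear, hence differentiable everywhere with $D\rho_\gamma(x)=\rho_\gamma$ for every $x$; in particular $\rho_\gamma(0)=0$. Equivariance of $f$ says that $f\circ\rho_\gamma=\rho_\gamma\circ f$ for all $\gamma\in\Gamma$.

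Next I would apply the chain rule to both sides at $x=0$. For the left-hand side, since $\rho_\gamma$ is differentiable at $0$ and $f$ is differentiable at $\rho_\gamma(0)=0$, we get $D(f\circ\rho_\gamma)(0)=Df(\rho_\gamma(0))\,D\rho_\gamma(0)=Df(0)\,\rho_\gamma$. For the right-hand side, since $f$ is differentiable at $0$ and $\rho_\gamma$ is differentiable everywhere (in particular at $f(0)$), we get $D(\rho_\gamma\circ f)(0)=D\rho_\gamma(f(0))\,Df(0)=\rho_\gamma\,Df(0)$. Equating the two expressions yields $Df(0)\,\rho_\gamma=\rho_\gamma\,Df(0)$ for every $\gamma\in\Gamma$, which is exactly the assertion that $Df(0)$ commutes with $\Gamma$, i.e. with the representation $\gamma\mapsto\rho_\gamma$.

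There is essentially no obstacle here; the only point to be careful about is that differentiability of $f$ is assumed only at the single point $0$, so one must check that the chain rule still applies. It does, precisely because $\rho_\gamma$ is smooth and $\rho_\gamma(0)=0$, so the relevant evaluations of $Df$ occur only at the origin. Note also that the argument never requires $f(0)=0$: it uses only the linearity of the action, which guarantees that $0$ is fixed by every $\rho_\gamma$, together with the equivariance of $f$.
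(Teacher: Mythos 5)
Your proof is correct and follows essentially the same route as the paper: differentiate the equivariance identity $f\circ\rho_\gamma=\rho_\gamma\circ f$ via the chain rule, using linearity of the action, and evaluate at the origin. You are in fact slightly more careful than the paper's version, which writes $Df(\gamma v)\gamma=\gamma Df(v)$ for general $v$ before evaluating at $0$; your observation that $\rho_\gamma(0)=0$ ensures the chain rule only needs $Df$ at the origin tidies up that point.
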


\begin{proof}
Since $f$ is $\Gamma $-equivariant we have
$f(\gamma .v ) = \gamma f(v)$ for all $\gamma \in \Gamma$ and $v \in V$.
Differentiating both sides of the equality with respect to $v$, we obtain
$
Df(\gamma . v) \gamma = \gamma Df(v)
$
and, evaluating at the origin gives
$
Df(0) \gamma = \gamma Df(0).
$
\end{proof}

\section{Symmetries in the plane}

In this section, we describe the consequences for the local dynamics arising from the fact that a  map is equivariant under the action of a compact Lie group $\Gamma$. These are patent in the structure of the Jacobian matrix at the origin, obtained using Lemma~\ref{LemmaJacobian}.

Since every compact Lie group in $GL(2)$ can be identified with a subgroup of the orthogonal group $O(2)$, we need  only be concerned with the groups we list below.

\paragraph{Compact subgroups of $O(2)$}
\begin{itemize}
    \item  $O(2)$, acting on $\R^2 \simeq \C$ as the group generated by $\theta$ and $\kappa$ given by
    $$
    \theta . z = e^{i\theta} z, \quad \theta \in S^1 \qquad\mbox{ and   } \qquad \kappa . z=\bar{z}.
    $$
    \item  $SO(2)$, acting on $\R^2 \simeq \C$ as the group generated by $\theta$  given by
    $$
    \theta . z = e^{i\theta} z, \quad \theta \in S^1.
    $$
    \item  $D_n$, $n  \geq 2$, acting on $\R^2 \simeq \C$ as the finite group generated by $\zeta$ and $\kappa$ given by
    $$
    \zeta . z = e^{\frac{2\pi i}{n}} z  \qquad\mbox{ and   } \qquad \kappa . z=\bar{z}.
    $$
    \item  $\Z_n$, $n  \geq 2$, acting on $\R^2 \simeq \C$ as the finite group generated by $\zeta$ given by
    $$
    \zeta . z = e^{\frac{2\pi i}{n}} z.
    $$
    \item  $\Z_2(\langle\kappa\rangle)$, acting on $\R^2$ as
    $$
    \kappa . (x,y) = (x, -y).
    $$
\end{itemize}

Since most of our results depend on the existence of a unique fixed point for $f$,
it is worthwhile noting that
the group actions we are concerned with are such that $\Fix (\Gamma) = \{ 0 \}$.
Therefore, if $f$ is $\Gamma$-equivariant then $f(0)=0$.

If the representation is absolutely irreducible, we know that $Df(0)$ is a multiple of the identity and thus it  has one real eigenvalue of geometric multiplicity two. Therefore, the origin is locally either an attractor or a repellor.
We have the following

\begin{lemma}
The standard representation on $\R^2$ is absolutely irreducible for $O(2)$ and $D_n$ with $n \geq 3$ and for no other subgroup of $O(2)$.
\end{lemma}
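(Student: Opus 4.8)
The plan is to check absolute irreducibility directly against the explicit list of compact subgroups of $O(2)$ given above, using the criterion that a representation on $\R^2$ is absolutely irreducible precisely when the only $2\times 2$ real matrices commuting with every group element are the scalar multiples of the identity. A matrix $A = \left(\begin{smallmatrix} a & b \\ c & d \end{smallmatrix}\right)$ commutes with the rotation by angle $\varphi$ iff $A$ has the form $\left(\begin{smallmatrix} a & -c \\ c & a \end{smallmatrix}\right)$ (a rotation-scaling), and it commutes with the reflection $\kappa: (x,y) \mapsto (x,-y)$ iff $b = c = 0$, i.e. $A$ is diagonal. So the strategy is: for each group, intersect these commutation constraints and see whether only scalars survive.

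First I would handle $O(2)$: it contains all rotations, forcing $A$ to be a rotation-scaling, \emph{and} it contains a reflection, forcing $A$ diagonal; the intersection is $\{aI\}$, so the representation is absolutely irreducible. For $D_n$ with $n\ge 3$: the rotations $\zeta^k$ generate rotation by $2\pi/n$, and commuting with a nontrivial rotation by an angle that is not a multiple of $\pi$ already forces $A = \left(\begin{smallmatrix} a & -c \\ c & a \end{smallmatrix}\right)$; then commuting with $\kappa$ forces $c=0$, giving $A = aI$. The hypothesis $n\ge 3$ is exactly what guarantees $2\pi/n \neq \pi$, so the rotation constraint is nontrivial — this is the one point requiring care. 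Then I would show the remaining groups on the list fail: for $SO(2)$ and $\Z_n$ the matrix $\left(\begin{smallmatrix} 0 & -1 \\ 1 & 0 \end{smallmatrix}\right)$ commutes with all rotations but is not scalar; for $D_2$, the group is generated by rotation by $\pi$ (which is $-I$, commuting with everything) and $\kappa$, so commuting with the group reduces to commuting with $\kappa$ alone, and the non-scalar diagonal matrix $\left(\begin{smallmatrix} 1 & 0 \\ 0 & -1 \end{smallmatrix}\right)$ works; for $\Z_2(\langle\kappa\rangle)$ the same diagonal matrix works. Since every compact subgroup of $GL(2)$ is conjugate to one on this list (as noted in the text), this exhausts all cases.

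The main obstacle — really the only subtle point — is justifying the threshold $n\ge 3$ versus $n=2$: one must observe that $D_2$ is special precisely because its rotation subgroup is $\{\pm I\}$, which commutes with every matrix and therefore imposes no constraint, whereas for $n\ge 3$ the rotation subgroup contains an element acting as a genuine (non-$\pm I$) planar rotation. Everything else is the routine linear algebra of computing commutants, which I would not spell out in full.
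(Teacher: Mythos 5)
Your proposal is correct and follows essentially the same route as the paper: a case-by-case computation of the commutant for each compact subgroup of $O(2)$, using the two elementary facts that commuting with a non-$\pm I$ rotation forces the form $\left(\begin{smallmatrix} a & -c \\ c & a \end{smallmatrix}\right)$ and commuting with $\kappa$ forces diagonality. Your explicit remark that $n\geq 3$ is exactly the threshold at which the rotation subgroup contains an element other than $\pm I$ is the same observation the paper makes via the condition $\sin(2\pi/n)\neq 0$.
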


\begin{proof}
The proof follows by direct computation.
\begin{itemize}
    \item  $O(2)$: the generators of this group are $\theta$ and $\kappa$ and it suffices to find the linear matrices that commute with both. A real matrix
    $$
    \left(\begin{array}{cc}
    a & b \\
    c & d
    \end{array} \right)
    $$
    commutes with $\kappa$ if and only if $b=c=0$. In order for such a matrix to commute with any rotation it must be
    $$
    \left( \begin{array}{cc}
    a & 0 \\
    0 & d
    \end{array} \right) \left(\begin{array}{cc}
    \cos{\theta} & -\sin{\theta} \\
    \sin{\theta} & \cos{\theta}
    \end{array} \right) = \left(\begin{array}{cc}
    \cos{\theta} & -\sin{\theta} \\
    \sin{\theta} & \cos{\theta}
    \end{array} \right) \left(\begin{array}{cc}
    a & 0 \\
    0 & d
    \end{array} \right)
    $$
    which holds when  $a=d$ or $\sin{\theta}=0$ for all $\theta \in S^1$.
    Therefore, the action of $O(2)$ is absolutely irreducible.
    \item  $SO(2)$: the elements of $SO(2)$ are rotation matrices which commute with any other rotation matrix, also non-diagonal ones.
    \item  $D_n$, $n  \geq 3$: see the proof for $O(2)$. In the last step, we must have $a=d$ or $\sin{2\pi i/n} = 0$ which is never satisfied for $n \geq 3$. Hence, the action is absolutely irreducible.
    \item  $\Z_n$, $n  \geq 3$: as for $SO(2)$, any rotation matrix commutes with the rotation of $2\pi /n$, including non-diagonal ones.
    \item  $\Z_2(\langle\kappa\rangle)$: see the proof for $\kappa \in O(2)$ to conclude that linear commuting matrices are diagonal but not necessarily linear multiples of the identity.
    \item  $\Z_2$: all linear maps commute with $-Id$.
    \item  $D_2=\Z_2 \oplus \Z_2(\langle\kappa\rangle)$: as above, $\Z_2$ introduces no restrictions and for commuting with $\kappa$ it suffices that the map is diagonal.
\end{itemize}
\end{proof}

The following result is then a straightforward consequence of the previous proof.

\begin{lemma}
The linear maps that commute with the standard representations of the subgroups of $O(2)$ are rotations and homotheties (and their compositions) for $SO(2)$ and $\Z_n$, $n \geq 3$, linear multiples of the identity for $O(2)$ and $D_n$, $n  \geq 3$, any linear map for $\Z_2$ and maps represented by diagonal matrices for the remaining groups.
\end{lemma}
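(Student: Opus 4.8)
The plan is to refine, group by group, the case analysis already carried out in the proof of the previous lemma, reading off in each case not merely whether the commutant is one-dimensional but exactly which matrices it contains. For the absolutely irreducible cases, namely $O(2)$ and $D_n$ with $n\geq 3$, there is nothing to add: by the very definition of absolute irreducibility the only linear maps commuting with the action are the scalar multiples of the identity. For $\Z_2$, represented as $\{\mathrm{Id},-\mathrm{Id}\}$, the matrix $-\mathrm{Id}$ is central in $GL(2,\R)$, so every linear map qualifies. For $\Z_2(\langle\kappa\rangle)$ the earlier computation shows that a real $2\times 2$ matrix commutes with $\kappa$ if and only if its two off-diagonal entries vanish, and since $D_2=\Z_2\oplus\Z_2(\langle\kappa\rangle)$ adds only the central generator $-\mathrm{Id}$, the same conclusion holds for $D_2$: the commuting maps are exactly those represented by diagonal matrices. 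This settles every group in the list except $SO(2)$ and $\Z_n$ with $n\geq 3$.

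For those two remaining families I would use the identification $\R^2\simeq\C$ and write an arbitrary real-linear map as $M(z)=\alpha z+\beta\bar z$, with $\alpha,\beta\in\C$ uniquely determined by $M$. Representing a rotation as multiplication by $\omega=e^{i\psi}$, one computes
$$
M(\omega z)=\alpha\omega z+\beta\bar\omega\bar z,\qquad \omega M(z)=\alpha\omega z+\beta\omega\bar z,
$$
so $M$ commutes with this rotation precisely when $\beta(\omega-\bar\omega)=0$. For $SO(2)$ this must hold for all $\psi$, and for $\Z_n$ with $n\geq 3$ it must hold for $\omega=e^{2\pi i/n}$, which is non-real; in either case there is a rotation with $\omega\neq\bar\omega$, forcing $\beta=0$. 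Hence $M(z)=\alpha z$ is complex multiplication, that is $M=r R_\varphi$ with $r=|\alpha|$ and $\varphi=\arg\alpha$, a homothety composed with a rotation (the zero map being the degenerate case $\alpha=0$); conversely every such map clearly commutes with all rotations. This is exactly the asserted description for $SO(2)$ and $\Z_n$, $n\geq 3$.

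The only step that is not completely mechanical is the reduction, for $\Z_n$ with $n\geq 3$, from commutation with the single rotation $e^{2\pi i/n}$ to the full conclusion $\beta=0$; this is where the hypothesis $n\geq 3$ enters, through $e^{2\pi i/n}\notin\R$, and it is consistent with the statement that the argument breaks down for $n=2$, where $e^{\pi i}=-1=\bar{(-1)}$ and the commutant is indeed the larger set of all linear maps ($\Z_2$ acting as $\pm\mathrm{Id}$) rather than the rotation--homothety family. No genuine obstacle is expected: the content is entirely in organising the bookkeeping of the seven cases appearing in the previous proof.
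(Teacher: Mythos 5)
Your proposal is correct and follows essentially the same route as the paper: both reuse the case-by-case computations from the preceding lemma (absolute irreducibility for $O(2)$ and $D_n$ with $n\geq 3$, the diagonal commutant of $\kappa$, the triviality of commuting with $-\mathrm{Id}$) and reduce everything to computing the commutant of a single non-trivial rotation. The only difference is notational — you carry out that last computation via the complex decomposition $M(z)=\alpha z+\beta\bar z$ and conclude $\beta=0$, whereas the paper equates the real $2\times 2$ matrix products and concludes $a=d$, $b=-c$ — which is the same calculation in different coordinates.
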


\begin{proof}
The only linear maps that were not already explicitly calculated in the previous proof are those that commute with rotations. We have
$$
\left( \begin{array}{cc}
    a & b \\
    c & d
    \end{array} \right) \left(\begin{array}{cc}
    \cos{\theta} & -\sin{\theta} \\
    \sin{\theta} & \cos{\theta}
    \end{array} \right) = \left(\begin{array}{cc}
    \cos{\theta} & -\sin{\theta} \\
    \sin{\theta} & \cos{\theta}
    \end{array} \right) \left(\begin{array}{cc}
    a & b \\
    c & d
    \end{array} \right)
$$
if and only if either $\sin{\theta}=0$ for all $\theta \in S^1$ or $a=d$ and $b=-c$. Hence, the only maps commuting with either $SO(2)$ or $\Z_n$, $n \geq 3$, are rotations and homotheties and their compositions.
\end{proof}

With the results obtained so far, we are able to describe the Jacobian matrix at the origin for maps equivariant under each of the groups above.

\begin{proposition}[Proposition 2.3 in \cite{SofisbeGlobal}]
Let $f$ be a planar map differentiable at the origin. The admissible forms for the Jacobian matrix of $f$ at the origin are those given in Table \ref{Jacobian} depending on the symmetry group of $f$.
\end{proposition}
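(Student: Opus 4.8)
The plan is to obtain the result directly from Lemma~\ref{LemmaJacobian} together with the classification of the linear maps commuting with each compact subgroup of $O(2)$ established in the two lemmas immediately preceding this proposition. First I would invoke the remark made just before the statement: every compact Lie group $\Gamma\subset GL(2)$ is conjugate within $GL(2)$ to a subgroup of $O(2)$, so after a linear change of coordinates we may assume that $\Gamma_f$ acts as one of the groups in the list of ``Compact subgroups of $O(2)$''; it is in these coordinates that the Jacobian is to be recorded in Table~\ref{Jacobian}. Since $f$ is $\Gamma_f$-equivariant and differentiable at the origin, Lemma~\ref{LemmaJacobian} then gives that $Df(0)$ commutes with every element of $\Gamma_f$.

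The core of the argument is then purely a matter of substituting, group by group, the description of the commuting linear maps furnished by the classification lemma above. For $O(2)$ and for $D_n$ with $n\geq 3$ the commuting linear maps are scalar multiples of the identity, so $Df(0)=\lambda\,Id$; for $SO(2)$ and for $\Z_n$ with $n\geq 3$ they are the compositions of a rotation with a homothety, i.e.\ matrices of the form $\left(\begin{array}{cc} a & -b \\ b & a\end{array}\right)$; for $\Z_2(\langle\kappa\rangle)$, for $D_2$ and for the remaining listed actions they are precisely the diagonal matrices; and for $\Z_2=\langle -Id\rangle$ every linear map commutes, so $Df(0)$ is unrestricted. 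Collecting these forms yields the entries of Table~\ref{Jacobian}.

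I do not expect a genuine obstacle here, since no dynamical information is used; the only point that needs a little care, and which I would make explicit, is the reduction to standard form. One must check that the displayed list of subgroups of $O(2)$ is exhaustive up to conjugacy and, more importantly, that distinct entries are genuinely inequivalent as representations — in particular the two $\Z_2$ actions, generated respectively by $-Id$ and by the reflection $\kappa$, and the reducible group $D_2=\Z_2\oplus\Z_2(\langle\kappa\rangle)$ must be kept apart, as they produce the ``arbitrary matrix'' and the ``diagonal matrix'' cases respectively. The further restrictions coming from hyperbolicity and from the hypothesis $\Fix(f)=\{0\}$ play no role in this proposition and are exploited only later in the paper.
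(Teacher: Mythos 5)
Your proposal is correct and follows essentially the same route as the paper, which obtains the table directly by combining Lemma~\ref{LemmaJacobian} with the immediately preceding classification of the linear maps commuting with each standard representation of the compact subgroups of $O(2)$. The extra care you devote to the reduction to standard form and to distinguishing the two $\Z_2$ actions is sensible but is left implicit in the paper's presentation.
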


\begin{table}
\begin{center}
\begin{tabular}{ccr}
\hline {}&{}&{}\\
Symmetry group &   $Df(0)$ & hyperbolic local dynamics  \\  {}&{}&{}\\
\hline {}&{}&{}\\
$O(2)$ & $\left( \begin{array}{cc} \alpha & 0 \\ 0 & \alpha \end{array} \right)$ $\alpha \in \R$
&attractor / repellor \\ {}&{}&{}\\
\hline {}&{}&{}\\
$SO(2)$ &  $\left( \begin{array}{cc} \alpha & -\beta \\ \beta & \alpha \end{array} \right)$  $\alpha, \beta \in \R$ &attractor / repellor \\ {}&{}&{}\\
\hline {}&{}&{}\\
$D_n, \;n\geq 3$ &  $\left( \begin{array}{cc} \alpha & 0 \\ 0 & \alpha \end{array} \right)$ $\alpha \in \R$
&attractor / repellor \\ {}&{}&{}\\
\hline {}&{}&{}\\
$\Z_n, \;n\geq 3$ & $\left( \begin{array}{cc} \alpha & -\beta \\ \beta & \alpha \end{array} \right)$  $\alpha, \beta \in \R$ &attractor / repellor \\ {}&{}&{}\\
\hline {}&{}&{}\\
$\Z_{2}$ &  any matrix  &saddle / attractor / repellor \\ {}&{}&{}\\
\hline {}&{}&{}\\
$\Z_2(\langle\kappa\rangle)$ & $\left( \begin{array}{cc} \alpha & 0 \\ 0 & \beta \end{array} \right)$ $\alpha, \beta \in \R$ &saddle / attractor / repellor \\ {}&{}&{}\\
\hline {}&{}&{}\\
$D_{2}=\Z_2 \oplus \Z_{2}(\langle\kappa\rangle)$\quad & \quad $\left( \begin{array}{cc} \alpha & 0 \\  0 & \beta \end{array} \right)$ $\alpha, \beta \in \R$ \quad &saddle / attractor / repellor \\ {}&{}&{}\\
\hline
\end{tabular}
\end{center}
\caption{Compact subgroups of $O(2)$ and the admissible forms of the Jacobian at the origin of maps equivariant under the standard action of each group.
If in addition the Jacobian at the origin is hyperbolic, then this determines the local stability.}\label{Jacobian}
\end{table}

Furthermore, the symmetry constrains the normal form as described in \cite[Theorem 2.1]{SofisbePolynomial}
and in the next result, and its consequences for  the linear part of $f$ appear in Table~\ref{Jacobian}.

\begin{proposition}[Proposition 3.1 in \cite{SofisbePolynomial}]
Let $\Gamma$ be a compact Lie group acting on $\R^2$. Assume $\Gamma$ is the symmetry group of a polynomial map $f$.
\begin{itemize}
	\item[(i)]  If $\kappa \in \Gamma$, then $f$ does not answer the DMYQ($2$) in the affirmative unless $f$ is of the form:
	$$
	f(x,y)=\left(\begin{array}{cc}d_1&0\\0&d_2\end{array}\right)\left(\begin{array}{c}x\\ y\end{array}\right)+
	y^2p(y^2)\left(\begin{array}{c}1\\ 0\end{array}\right)\ .
	$$
	\item[(ii)]  If there is an element $\zeta  \in \Gamma$ of order $n \geq 3$, then $f$ does not answer the DMYQ($2$) in the affirmative unless $f$ is linear.
	Moreover,  the linear part of $f$ is either a homothety or a rotation matrix.
\end{itemize}
\end{proposition}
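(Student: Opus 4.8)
The plan is to reduce to a statement about $Jf$ and then argue separately for (ii) and (i). The first step uses that $f$ is polynomial: by Cima {\em et al.}\ \cite{Cima-Manosa}, ``$f$ answers the DMYQ($2$) affirmatively'' is equivalent to $f(0)=0$ together with the condition that $Jf(p)$ has all eigenvalues of modulus $<1$ for every $p\in\R^2$. Under that condition the polynomials $\mathrm{tr}\,Jf$ and $\det Jf$ are bounded on $\R^2$ (by $2$ and $1$), hence constant; I would record this as the starting point for both parts.

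For (ii): an element of $O(2)$ of order $\geq 3$ cannot be a reflection, so $\zeta$ is a rotation, $\langle\zeta\rangle$ is the standard $\Z_n$, and $f$ is $\Z_n$-equivariant with $Df(0)=\left(\begin{array}{cc}\alpha&-\beta\\ \beta&\alpha\end{array}\right)$ by Lemma~\ref{LemmaJacobian} and Table~\ref{Jacobian}. I would then argue by contradiction: if $f$ is not linear, let $d\geq 2$ be its top homogeneous degree and $D_i=\deg f_i$, so $d=\max(D_1,D_2)$. Since $\det Jf$ is constant, its degree-$(D_1+D_2-2)$ part $\det J\!\left(f_1^{(D_1)},f_2^{(D_2)}\right)$ vanishes. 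When $D_1\neq D_2$ the top homogeneous part of $f$, seen as a map of $\R^2\simeq\C$, is one of $f_1^{(D_1)}$, $if_2^{(D_2)}$; when $D_1=D_2=d$ the vanishing Jacobian forces $f_2^{(d)}=\lambda f_1^{(d)}$ for a constant $\lambda$ (two homogeneous polynomials of equal degree with $\det J\equiv 0$ are proportional), so the top part is $(1+i\lambda)f_1^{(d)}$. In every case the top part of $f$ equals $\nu\,R$ with $\nu\in\C\setminus\{0\}$ and $R$ a nonzero {\em real-valued} homogeneous polynomial; but it is again $\Z_n$-equivariant, so $R(\zeta z)=\zeta\,R(z)$, and evaluating at a point where $R\neq 0$ shows that the primitive $n$-th root of unity by which $\zeta$ acts on $\C$ is real --- impossible for $n\geq 3$. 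Hence $f$ is linear, $f=Df(0)$: a homothety if $\beta=0$, otherwise a scaled rotation (so that $\Gamma_f\supseteq SO(2)$).

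For (i): $\kappa\in\Gamma_f$ gives $f_1=\phi(x,y^2)$, $f_2=y\,\psi(x,y^2)$ with $\phi(0,0)=0$. On the fixed axis $y=0$, $Jf=\mathrm{diag}\bigl(\phi_x(x,0),\psi(x,0)\bigr)$, and these diagonal entries are eigenvalues of $Jf$, hence bounded one-variable polynomials, hence equal to constants $d_1,d_2$ with $|d_i|<1$. Consequently $\mathrm{tr}\,Jf\equiv d_1+d_2$ and $\det Jf\equiv d_1 d_2$ everywhere, and eliminating $\psi+2y^2\psi_s$ between these identities yields, with $s=y^2$,
\[
(\phi_x-d_1)(\phi_x-d_2)=-2s\,\phi_s\,\psi_x .
\]
Writing $\widetilde A:=(\phi_x-d_1)/s$ (a polynomial), expanding $\psi+2s\psi_s=d_1+d_2-\phi_x$ in powers of $s$ computes $\psi$ from $\widetilde A$: one gets that $\psi-d_2$ is divisible by $s$ and $\psi_x=-s\,R$, where $R$ is obtained from $\widetilde A$ by differentiating in $x$ and rescaling the coefficient of $s^j$ by $\tfrac{1}{2j+3}$. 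Substituting turns the displayed identity into the single polynomial equation
\[
2s\,\phi_s\,R=\widetilde A\,\bigl(s\widetilde A+d_1-d_2\bigr).
\]
Assuming $\widetilde A\not\equiv 0$, one first checks $M:=\deg_x\widetilde A\geq 1$ (if $\widetilde A$ depended only on $s$ then $R\equiv 0$ and the equation would force $\widetilde A\equiv 0$); then, using that $\phi_x=d_1+s\widetilde A$ together with $\phi_{xs}=\phi_{sx}$ makes $\phi_s$ an $x$-primitive of $\widetilde A+s\widetilde A_s$, so $\deg_x\phi_s=M+1$, I would compare leading coefficients in $x$ and then in $s$ and reach an impossible numerical identity. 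Hence $\widetilde A\equiv 0$, i.e.\ $\phi_x\equiv d_1$; with $\phi(0,0)=0$ this gives $\phi=d_1 x+y^2 p(y^2)$, and then $\psi+2s\psi_s\equiv d_2$ forces $\psi\equiv d_2$, i.e.\ $f_2=d_2 y$.

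The routine part is the opening reduction and, in (ii), the dichotomy on degrees; the equal-degree proportionality and the remark that a nonzero real-valued homogeneous polynomial cannot be $\Z_n$-equivariant for $n\geq 3$ combine cleanly. I expect the main obstacle to be the last step of (i): the integrability of $\phi$ --- that $\phi_s$ has $x$-degree exactly one more than $\widetilde A$ --- must be exploited to make the leading terms of the two sides of $2s\,\phi_s\,R=\widetilde A(s\widetilde A+d_1-d_2)$ incompatible, and this is precisely the fact that forbids $f_2$ from genuinely depending on $x$. Getting that bookkeeping right, and in particular handling the case $d_1=d_2$ (where a crude degree comparison alone does not close, and one really compares the leading coefficients of $\widetilde A$ in $x$ and then in $s$), is where the actual work lies.
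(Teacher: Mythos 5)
Your proposal is correct, and it is worth noting up front that this survey states the proposition by citation only, so the comparison is with the proof in \cite{SofisbePolynomial} rather than with anything printed here. Your entry point (for polynomial $f$ the affirmative answer is, by \cite{Cima-Manosa}, equivalent to the spectral hypothesis, under which $\mathrm{tr}\,Jf$ and $\det Jf$ are bounded polynomials, hence constant, and the diagonal entries of $Jf$ along $\Fix\langle\kappa\rangle$ are bounded one-variable polynomials, hence constants $d_1,d_2$) is exactly the standard one, and your elimination $(\phi_x-d_1)(\phi_x-d_2)=-2s\,\phi_s\psi_x$ is right. The step you leave as ``an impossible numerical identity'' does close: with $\widetilde A=\sum c_{m,k}x^ms^k$, $M=\deg_x\widetilde A\geq 1$, $a_M(s)$ the leading $x$-coefficient and $K=\deg_s a_M$, the $x^{2M}$-coefficients of $2s\,\phi_sR=\widetilde A(s\widetilde A+d_1-d_2)$ give $2M\,(sa_M)'\,L[a_M]=(M+1)\,a_M^2$, where $L$ divides the $s^k$-coefficient by $2k+3$; comparing leading $s$-coefficients yields $2M(K+1)=(M+1)(2K+3)$, i.e.\ $M+2K+3=0$, which is impossible --- and since the $(d_1-d_2)\widetilde A$ term never reaches $x$-degree $2M$, this single computation covers $d_1=d_2$ and $d_1\neq d_2$ uniformly, so the distinction you worry about at the end is immaterial. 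For (ii), your argument via the top homogeneous part (constancy of $\det Jf$ forces the leading form to be $\nu R$ with $R$ real-valued, and $R(\omega z)=\omega R(z)$ is impossible for a nonzero real form when $\omega$ is a primitive $n$-th root of unity, $n\geq 3$) is a clean, self-contained alternative to the cited paper's organization, which runs the full $\Z_n$-equivariant normal form of its Theorem~2.1 through the constant-trace/determinant conditions; your route avoids the equivariant normal-form machinery beyond the elementary parity decomposition for $\kappa$, at the cost of not producing the normal forms that the original paper reuses elsewhere. One cosmetic remark: your conclusion that the linear part in (ii) is $\alpha I+\beta J$, a homothety composed with a rotation, is the correct content of the statement's phrase ``either a homothety or a rotation matrix'' (an actual rotation matrix is excluded by $\abs{\alpha\pm i\beta}<1$ in any case).
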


\section{Dynamics --- local to global}

\begin{figure}\begin{center}
a)\includegraphics[scale=0.40]{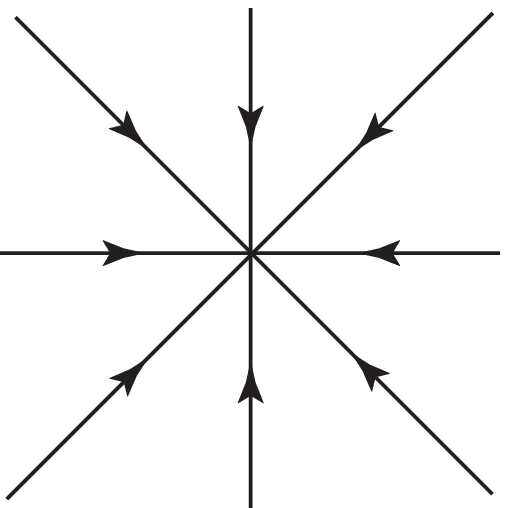}\qquad
b)\includegraphics[scale=0.40]{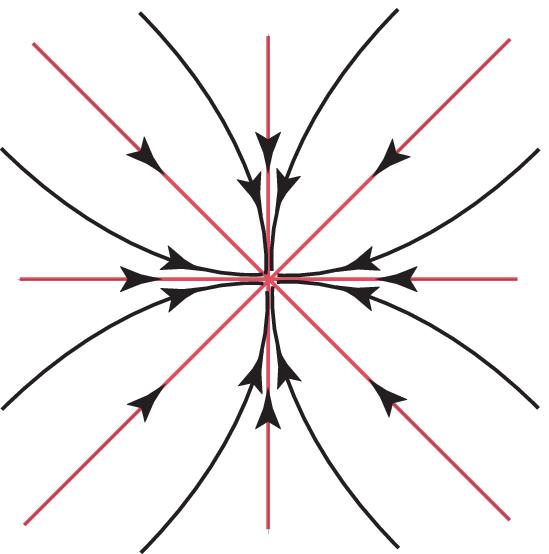}\qquad
c)\includegraphics[scale=0.40]{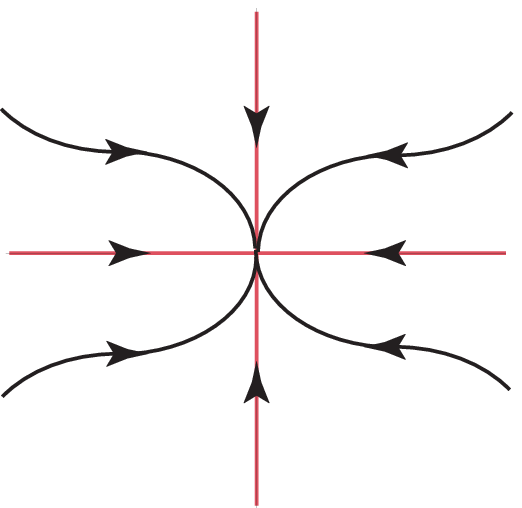}\qquad
d)\includegraphics[scale=0.40]{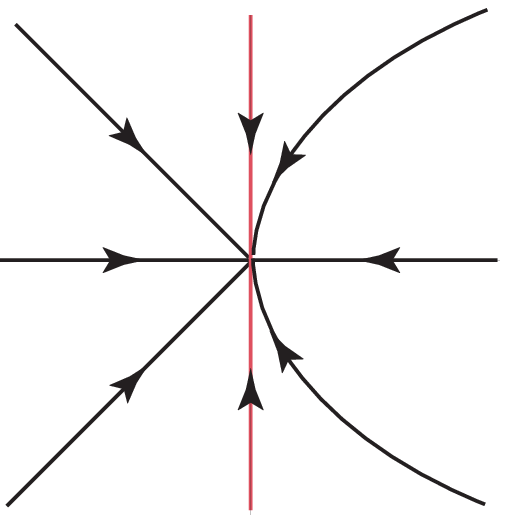}
\end{center}
\caption{
Local/global attractor with symmetry:
a) $O(2)$;
b) $D_4$ (without symmetries $\Z_8$ or $SO(2)$);
c) $D_2$ (without symmetry $D_4$);
d) $\Z_2\langle\kappa\rangle$  (without symmetry $D_4$)
\label{ComFlip}}\end{figure}

Figure~\ref{ComFlip} illustrates the dynamics near the origin  of equivariant maps for several symmetry groups.
A common feature of Figures~\ref{ComFlip} a)--d) is the existence of at least one symmetry axis.
This axis is the subspace fixed by a reflection and hence it is invariant under the dynamics.
Such a fixed-point subspace naturally contains an invariant ray
(see \cite[Lemma 3.3]{SofisbeGlobal}).
This allows us to use Proposition~\ref{lemrayo} to obtain the following results:

\begin{proposition}[Proposition 3.4 in \cite{SofisbeGlobal}] \label{proprayoS} Let $f\in \Emb(\R^2)$ have symmetry group $\Gamma$ with $\kappa \in \Gamma$, such that $\Fix(f)=\{0\}$. Suppose one of the following holds:

\begin{itemize}
\item[$a)$] $f\in \Emb^ {+}(\R^2)$ and $f$ does not interchange connected components of $\R^2 \setminus \Fix \langle\kappa\rangle$.
\item[$b)$]  $\Fix(f^ 2)=\{0\}$.
\end{itemize}
Then for each $p\in \R^2$ either $\omega(p)=\{0\}$ or $\omega(p)=\infty$.
\end{proposition}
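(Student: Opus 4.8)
The plan is to reduce both cases to an application of Proposition~\ref{lemrayo}. The key geometric fact, cited from \cite[Lemma 3.3]{SofisbeGlobal}, is that since $\kappa\in\Gamma$, the reflection axis $\Fix\langle\kappa\rangle$ is a line through the origin that is invariant under $f$ (or under $f^2$), and hence contains an $f$-invariant (resp. $f^2$-invariant) ray $\gamma$ emanating from $0$ and going to infinity. So the real work is to arrange that the hypotheses of Proposition~\ref{lemrayo} — namely that the relevant map is an orientation-preserving embedding with the origin as its unique fixed point and admits an invariant ray — are met, and then to translate ``trivial dynamics'' into the dichotomy $\omega(p)=\{0\}$ or $\omega(p)=\infty$, using that $\Fix(f)=\{0\}$.

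For case $a)$, I would argue as follows. The line $L=\Fix\langle\kappa\rangle$ separates the plane into two half-planes $H_+$ and $H_-$. Since $f\in\Emb^+(\R^2)$ and $f$ does not interchange the components of $\R^2\setminus L$, we have $f(H_\pm)\subseteq H_\pm$ together with $f(L)\subseteq L$ (the latter because $L=\Fix\langle\kappa\rangle$ is invariant under any $\Gamma$-equivariant map, by the fixed-point-subspace lemma). Restricting attention to $L\cong[0,\infty)\cup(-\infty,0]$: equivariance under $\kappa$ forces $f$ to commute with the reflection, so $f$ maps each of the two rays of $L\setminus\{0\}$ into itself; picking one of them gives the $f$-invariant ray $\gamma$ (with $\gamma(0)=0$, $f(\gamma)\subseteq\gamma$, and $|\gamma(t)|\to\infty$, the last point since $\Fix(f)=\{0\}$ means $f$ has no fixed point on the ray other than $0$, so orbits on the ray cannot stay bounded — or one invokes \cite[Lemma 3.3]{SofisbeGlobal} directly for this). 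Then Proposition~\ref{lemrayo} applies verbatim and gives $\omega(p)\subseteq\Fix(f)=\{0\}$ for all $p$; since $\omega(p)$ is either nonempty (hence $=\{0\}$) or the orbit is unbounded, one checks that the latter forces $\|f^n(p)\|\to\infty$, i.e. $\omega(p)=\infty$. This last step — ruling out an unbounded orbit that does not actually escape to infinity — is where one needs a small argument: if $\omega(p)\ne\{0\}$ and $\omega(p)\ne\emptyset$ then there is a point in $\omega(p)$, and since $\omega(p)\subseteq\Fix(f)=\{0\}$ this is impossible unless $\omega(p)=\{0\}$; so $\omega(p)\ne\{0\}$ forces $\omega(p)=\emptyset$, which by definition of $\omega(p)$ in $\Emb(\R^2)$ means exactly $\|f^n(p)\|\to\infty$.

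For case $b)$, the map $f$ need not be orientation preserving, so I cannot apply Proposition~\ref{lemrayo} to $f$ directly. Instead I would pass to $g=f^2\in\Emb^+(\R^2)$ (the square of any embedding is orientation preserving). The hypothesis $\Fix(f^2)=\{0\}$ gives $\Fix(g)=\{0\}$. Moreover $g$ is still $\Gamma$-equivariant, hence $\kappa$-equivariant, so $L=\Fix\langle\kappa\rangle$ is $g$-invariant; and because $g$ is orientation preserving it cannot interchange the two half-planes bounded by $L$ (an orientation-preserving embedding fixing a separating line setwise preserves its sides), so the argument of case $a)$ applies to $g$: there is a $g$-invariant ray and Proposition~\ref{lemrayo} gives that $g=f^2$ has trivial dynamics, with $\omega_g(p)\subseteq\Fix(g)=\{0\}$. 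It then remains to transfer this back to $f$: the $f$-orbit of $p$ is the union of the $g$-orbits of $p$ and of $f(p)$, so $\omega_f(p)=\omega_g(p)\cup\omega_g(f(p))$, and both pieces are either $\{0\}$ or $\infty$; a short argument (using continuity of $f$ and $f(0)=0$ to see the two half-orbits cannot behave differently) shows $\omega_f(p)$ is $\{0\}$ or $\infty$.

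The main obstacle I expect is not any single hard estimate but getting the two bookkeeping steps exactly right: first, the claim in case $a)$ that an orientation-preserving embedding which does not swap the sides of $L$ genuinely maps each open half-plane into itself and each half-ray of $L$ into itself (this uses that $\kappa$-equivariance pins down the behaviour on $L$ and that ``not interchanging components'' plus injectivity prevents folding); and second, the passage from ``$f^2$ has trivial dynamics'' to the stated dichotomy for $f$ in case $b)$, where one must make sure that the even and odd subsequences of the $f$-orbit cannot split into one converging to $0$ while the other escapes — which is ruled out because $0$ is the unique fixed point and $f$ is continuous at $0$. Everything else is a direct citation of \cite[Lemma 3.3]{SofisbeGlobal} and Proposition~\ref{lemrayo}.
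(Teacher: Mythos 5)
Your route is the same as the paper's: the reflection axis $\Fix\langle\kappa\rangle$ supplies an invariant ray (the content of \cite[Lemma 3.3]{SofisbeGlobal}), Proposition~\ref{lemrayo} then gives trivial dynamics, and $\Fix(f)=\{0\}$ turns $\omega(p)\subset\Fix(f)$ into the dichotomy $\omega(p)=\{0\}$ or $\omega(p)=\emptyset=\infty$. Case $a)$ and the final translation step are handled correctly (note only that $\lim_{t\to\infty}\abs{\gamma(t)}=\infty$ is a property of the half-line itself, not something to be deduced from the dynamics).

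There is one faulty justification in case $b)$: the claim that ``an orientation-preserving embedding fixing a separating line setwise preserves its sides'' is false --- rotation by $\pi$ preserves orientation, maps the $x$-axis to itself, and interchanges the two half-planes. Fortunately the fact you need is true for a simpler reason and the orientation of $g=f^2$ is irrelevant to it: by $\kappa$-equivariance and injectivity, $f$ maps $\R^2\setminus\Fix\langle\kappa\rangle$ into itself, so $f$ either preserves both half-planes or swaps them, and in either case $f^2$ preserves each one; likewise $f$ restricted to the axis is a continuous injection fixing only $0$, hence monotone, so $(f|_{\Fix\langle\kappa\rangle})^2$ is increasing and maps each closed half-ray into itself, which is all that is needed to produce the $f^2$-invariant ray. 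With that repair, your passage back from $f^2$ to $f$ (the even and odd subsequences cannot split, by continuity of $f$ at the fixed point $0$) is sound, and the proof is complete.
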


The next example shows that assumption $b)$ in Proposition \ref{proprayoS} is necessary in the case where $f$ interchanges connected components of $\R^2 \setminus \Fix \langle\kappa\rangle$.

\paragraph{Example:} Consider the map $f: \R^2 \to \R^2$ defined by
$$
f(x,y)=\left(-ax^3+(a-1)x,-\frac{y}{2}\right) \quad 0<a<1.
$$
It is easily checked that $f$ has symmetry group $D_2$ and verifies (see Figure \ref{figureattractor}):
\begin{enumerate}
\item $f\in \Emb^{+}(\R^2)$ is an orientation-preserving diffeomorphism.
\item $\spec(f)\cap [0, \infty)=\emptyset$.
\item $0$ is a local hyperbolic attractor.
\item $\Fix(f^2)\neq \{0\}$.
\end{enumerate}
\medskip

\begin{figure}[htp]
\sidecaption
  \includegraphics[width=2in]{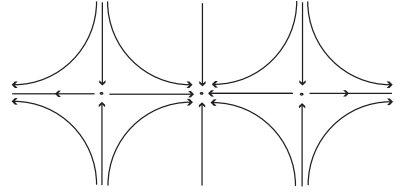}\\
  \caption{A local attractor which is not a global attractor due to the existence of periodic orbits.}\label{figureattractor}
\end{figure}

\begin{theorem}[Theorem 3.5  in \cite{SofisbeGlobal}] \label{proprayoS1} Let $f\in \Emb(\R^2)$ be dissipative with symmetry group $\Gamma$ with $\kappa \in \Gamma$ such that $\Fix(f)=\{0\}$. Suppose in addition that one of the following holds:

\begin{itemize}
\item[$a)$] $f\in \Emb^ {+}(\R^2)$ and $f$ does not interchange connected components of $\R^2 \setminus \Fix \langle\kappa\rangle$.
\item[$b)$]  There exist no $2$-periodic orbits.
\end{itemize}
Then $0$ is a global attractor.

\end{theorem}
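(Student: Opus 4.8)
The plan is to deduce the theorem from Proposition~\ref{proprayoS} and then use dissipativity to discard the escaping alternative. First I would check that, under either hypothesis, Proposition~\ref{proprayoS} applies. In case $a)$ the hypotheses coincide verbatim. In case $b)$, the absence of $2$-periodic orbits means precisely that every point fixed by $f^2$ is already fixed by $f$; together with $\Fix(f)=\{0\}$ this gives $\Fix(f^2)=\{0\}$, which is exactly hypothesis $b)$ of Proposition~\ref{proprayoS}. Hence in both cases Proposition~\ref{proprayoS} yields the dichotomy: for every $p\in\R^2$, either $\omega(p)=\{0\}$ or $\omega(p)=\infty$.

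Next I would invoke dissipativity to rule out $\omega(p)=\infty$. By definition there is a compact set $W\subset\R^2$ with $f(W)\subset W$ such that $dist(f^n(x),W)\to 0$ for every $x$; in particular, for a fixed $p$ there is $N$ with $f^n(p)$ in the $1$-neighbourhood of $W$ for all $n\geq N$, so the whole forward orbit of $p$ is bounded. Therefore $\norm{f^n(p)}\not\to\infty$, i.e.\ $\omega(p)\neq\infty$, and $\omega(p)$ is non-empty since the forward orbit of $p$ has compact closure. The dichotomy then forces $\omega(p)=\{0\}$. As $p\in\R^2$ was arbitrary, the basin of attraction $\U$ of the origin equals $\R^2$, which is the statement that $0$ is a global attractor.

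I do not expect a genuine obstacle here: the mathematical substance is already packaged in Proposition~\ref{proprayoS}, which in turn rests on the invariant-ray argument of Proposition~\ref{lemrayo}. The only points that need care are the elementary reformulation of the ``no $2$-periodic orbits'' condition as $\Fix(f^2)=\{0\}$, and the equally elementary observation that a dissipative embedding admits no orbit tending to infinity. If one also wanted the origin to be \emph{globally asymptotically stable} rather than merely a global attractor, the extra ingredient would be Lyapunov stability at $0$, which in this symmetric hyperbolic-attractor setting follows from the structure of $Df(0)$ recorded in Table~\ref{Jacobian}; but that is not claimed in the statement.
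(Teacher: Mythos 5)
Your argument is correct and follows exactly the route the paper intends: the paper states this theorem (without proof, citing \cite{SofisbeGlobal}) immediately after Proposition~\ref{proprayoS}, precisely because it is that proposition's dichotomy combined with dissipativity ruling out $\omega(p)=\infty$. Your two auxiliary observations --- that ``no $2$-periodic orbits'' together with $\Fix(f)=\{0\}$ gives $\Fix(f^2)=\{0\}$, and that a dissipative embedding has all forward orbits bounded --- are both right and are the only glue needed.
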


\begin{corollary} [Corollary 3.6  in \cite{SofisbeGlobal}]Suppose the assumptions of Theorem \ref{proprayoS1}
are verified and $f$ is differentiable at $0$. If every eigenvalue of $Df(0)$ has norm strictly less than one, then $0$ is a global asymptotic attractor.
\end{corollary}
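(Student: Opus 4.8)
The plan is to combine Theorem~\ref{proprayoS1} with a standard linearisation estimate at the fixed point. First I would invoke Theorem~\ref{proprayoS1} directly: its hypotheses are assumed to hold, so $0$ is a global attractor, i.e. $\omega(p)=\{0\}$ for every $p\in\R^2$ and in particular $\U=\R^2$. By the definitions in Section~\ref{secPre}, it then remains only to show that $0$ is in addition a \emph{stable} fixed point, since ``globally asymptotically stable'' means ``stable global attractor''.

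Second, I would exploit the spectral hypothesis to obtain local stability. Since every eigenvalue of $A:=Df(0)$ has modulus strictly less than one, its spectral radius $\lambda$ satisfies $\lambda<1$, and for any fixed $\mu\in(\lambda,1)$ there is a norm $\norm{\cdot}_{*}$ on $\R^2$, adapted to $A$, with $\norm{A}_{*}\le\mu$. Using only differentiability of $f$ at the origin (no $C^1$ regularity is assumed here), write $f(x)=Ax+r(x)$ with $\norm{r(x)}_{*}/\norm{x}_{*}\to 0$ as $x\to 0$; then, fixing $\nu\in(\mu,1)$, there is $\delta>0$ such that $\norm{f(x)}_{*}\le\nu\norm{x}_{*}$ on $B:=\{x:\norm{x}_{*}<\delta\}$. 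Hence $B$ is forward invariant and $f(B)\subset\nu B\subset B$.

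Third, I would verify stability directly from the definition in Section~\ref{secPre}: given any neighbourhood $U$ of $0$, choose $\rho>0$ with $\{x:\norm{x}_{*}<\rho\}\subset U$ and set $V:=\{x:\norm{x}_{*}<\min(\delta,\rho)\}$; then $f(V)\subset\nu V\subset V$ and $f(V)\subset\{x:\norm{x}_{*}<\rho\}\subset U$, which is exactly the required stability condition. Thus $0$ is stable, and being also a global attractor it is a globally asymptotically stable fixed point, that is, an asymptotically global attractor.

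I do not expect a serious obstacle, since the argument is essentially an assembly of Theorem~\ref{proprayoS1} with an elementary contraction estimate. The only point requiring care is that $f$ is assumed differentiable only at the origin, so the local asymptotic stability must be derived from the adapted-norm estimate above rather than from the Hartman--Grobman theorem; one must also be careful to produce the forward-invariant neighbourhood $V$ in precisely the form demanded by the notion of stability used in this paper.
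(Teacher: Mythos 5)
Your argument is correct and is the standard (and essentially only natural) route: Theorem~\ref{proprayoS1} gives global attraction, and the spectral condition on $Df(0)$ yields local asymptotic stability via an adapted norm in which $f$ is a local contraction, using only differentiability at the origin; the paper states this corollary without reproducing a proof, but this is exactly the intended assembly. Your care in verifying the forward-invariant neighbourhood $V$ against the paper's specific definition of stability is appropriate and the details check out.
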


For  analogous results concerning a repellor see \cite{SofisbeGlobal}.

\begin{figure}
\sidecaption
a) \includegraphics[scale=0.35]{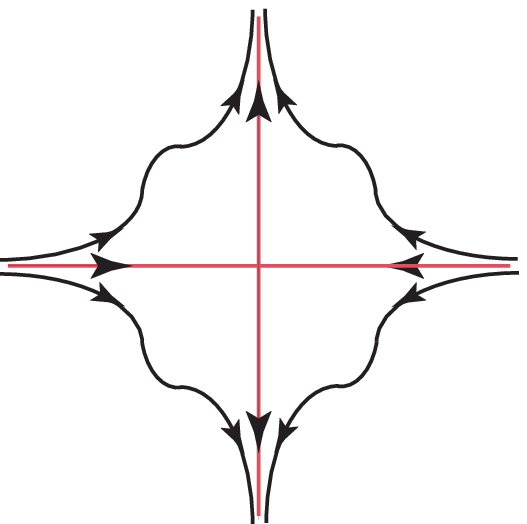}
\quad
b) \includegraphics[scale=0.35]{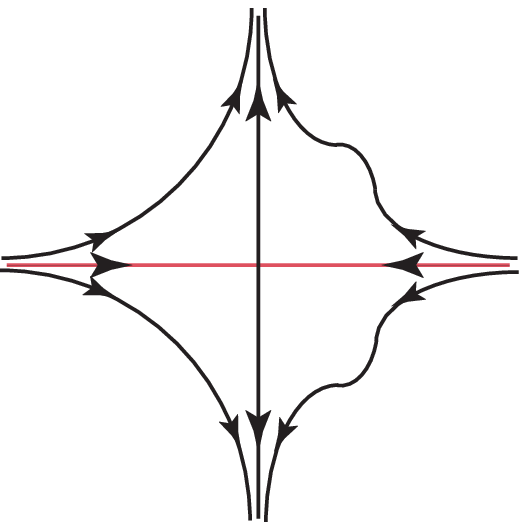}
\quad
c) \includegraphics[scale=0.35]{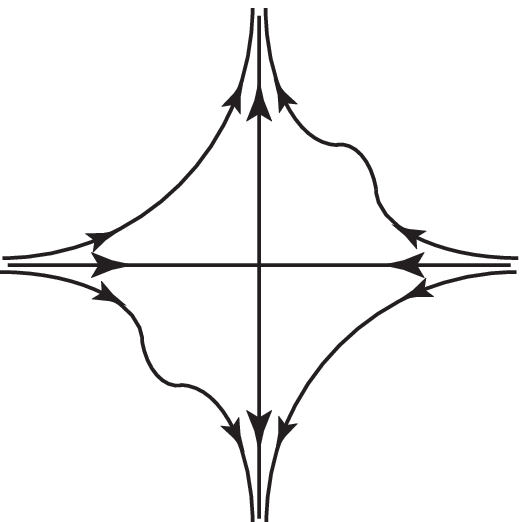}
\caption{
Local/global saddle with symmetry:
a) $D_2$;
b) $\Z_2(\kappa)$;
c) $\Z_2$.
\label{figSaddle}}\end{figure}

For the groups $\Z_{2}$, $\Z_2(\langle\kappa\rangle)$  and $D_{2}=\Z_2 \oplus \Z_{2}(\langle\kappa\rangle)$
the origin may also be a saddle as illustrated in Figure~\ref{figSaddle}.
For $D_{2}$, we have:

%
%

\begin{proposition} [\cite{SofisbeSaddle}]
Let $f\in \Hom(\R^ 2)$ be a $C^1$-homeomorphism with symmetry group $D_2$ such that $Fix(f)=\{0\}$. Suppose also that one of the following holds:

\begin{itemize}
\item[$a)$] $f$ is orientation preserving and $0$ is a direct saddle.
\item[$b)$] There exist no $2$-periodic orbits.
\end{itemize}

Then if $0$ is a local saddle, then $0$ is a global saddle.
\end{proposition}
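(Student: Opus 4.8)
The plan is to reduce both cases to one mechanism: exhibit an orientation-preserving homeomorphism whose only fixed point is $0$ and which admits an invariant ray, invoke the Corollary to Proposition~\ref{lemrayo}, and read the four–quadrant structure off the $D_2$–symmetry.

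Normalise first. The two reflections in $D_2$ fix the coordinate axes, and by Lemma~\ref{LemmaJacobian} (cf. Table~\ref{Jacobian}) $Df(0)=\mathrm{diag}(\alpha,\beta)$; interchanging the roles of the two axes if necessary, assume $0<|\alpha|<1<|\beta|$. Each axis is $f$-invariant; $f$ restricted to the $x$-axis is an odd homeomorphism $g$ of $\R$ with $g'(0)=\alpha$ and, since $\Fix(f)=\{0\}$, with $0$ as its only fixed point, and similarly $h$ on the $y$-axis with $h'(0)=\beta$. From $|\alpha|<1$ one checks that every point of the $x$-axis tends to $0$ under $f$, and from $|\beta|>1$ that every point of the $y$-axis tends to $0$ under $f^{-1}$. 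Conversely, applying the stable/unstable manifold theorem to the $C^1$ map $f$ and using invariance of the axes forces the local stable and unstable manifolds at $0$ to be the germs of the $x$- and $y$-axes, so the stable set $\{p:f^n(p)\to0\}$ is exactly the $x$-axis and the unstable set $\{p:f^{-n}(p)\to0\}$ is exactly the $y$-axis. Hence $W^s(0,f)$ and $W^u(0,f)$ are these two axes: both unbounded, meeting only at $0$ and (on $\mathbf{S}^2$) at $\infty$, with $\R^2\setminus(W^s\cup W^u)$ equal to the union $U_1\cup U_2\cup U_3\cup U_4$ of the four open quadrants, each homeomorphic to $\R^2$.

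Now produce the ray. In case $a)$ the eigenvalues are positive, so $g,h$ are increasing homeomorphisms of $\R$ preserving each semi-axis, and since $|\alpha|<1$ the map $f$ carries the positive $x$-semi-axis onto itself, an $f$-invariant ray; moreover $f$ is orientation preserving by hypothesis. In case $b)$ replace $f$ by $F=f^2\in\Hom^+(\R^2)$: absence of $2$-periodic orbits gives $\Fix(F)=\{0\}$, $DF(0)=\mathrm{diag}(\alpha^2,\beta^2)$ is a direct saddle, $F$ preserves each semi-axis, and $|\alpha|^2<1$ makes the positive $x$-semi-axis an $F$-invariant ray. In either case the Corollary to Proposition~\ref{lemrayo} applies, to $f$ in case $a)$ and to $F$ in case $b)$: for each $p$, along $n\to+\infty$ and along $n\to-\infty$, the iterates either tend to $0$ or have norm tending to $\infty$.

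To finish, fix $p$ in an open quadrant $U_i$; then $p$ lies on neither axis, so $f^{n}(p)\not\to0$ and $f^{-n}(p)\not\to0$ by the first paragraph, whence $\|f^{n}(p)\|\to\infty$ and $\|f^{-n}(p)\|\to\infty$ — in case $b)$ first for the even iterates, and then for all iterates since continuity of $f^{-1}$ turns $\|f^{2n}(p)\|\to\infty$ into $\|f^{2n+1}(p)\|\to\infty$. This is exactly condition (ii) in the definition of global saddle. For condition (i), note that $f$ permutes the four open quadrants, the permutation being dictated by the signs of $\alpha,\beta$ ($\alpha<0$ forces $f$ to swap the two $x$-semi-axes, $\beta<0$ to swap the two $y$-semi-axes), hence it coincides with the permutation induced by one of the involutions $\mathrm{Id},-\mathrm{Id},\kappa,-\kappa$ in $D_2$; taking $\varphi$ to be that element gives $(f\circ\varphi)(U_i)=U_i$ (and $f(U_i)=U_i$ outright in case $a)$). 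Therefore $0$ is a global saddle. The delicate point is the first paragraph: one must pin down that the stable and unstable \emph{sets} are precisely the axes, since this is what upgrades the soft dichotomy supplied by Proposition~\ref{lemrayo} to the sharp two-sided escape (ii); by contrast, the orientation/permutation bookkeeping and the passage from $F=f^2$ back to $f$ are routine once $\Fix(f^2)=\{0\}$ has been secured.
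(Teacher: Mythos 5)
The paper prints no proof of this proposition (it is quoted from \cite{SofisbeSaddle}, listed as in preparation), so I can only measure your argument against the framework the paper sets up. Your route --- invariant rays inside $\Fix\langle\kappa\rangle$ and $\Fix\langle-\kappa\rangle$, the Corollary to Proposition~\ref{lemrayo} applied to $f$ in case $a)$ and to $F=f^2$ in case $b)$, and the quadrant/involution bookkeeping via the $D_2$-action --- is exactly the mechanism the surrounding text advertises for these local-to-global results, and the overall structure is sound: the one-dimensional monotonicity argument on the axes, the reduction of case $b)$ to $\Fix(f^2)=\{0\}$, and the recovery of the odd iterates from the even ones by continuity of $f^{\pm1}$ are all correct.

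The one step that is not yet a proof is the one you yourself flag: ``invariance of the axes forces the local stable and unstable manifolds at $0$ to be the germs of the $x$- and $y$-axes.'' Invariance of the axes only gives one inclusion: the germ of the $x$-axis lies \emph{inside} $W^s_{loc}$, since points of the axis near $0$ stay near $0$ and converge. It does not by itself exclude stable points off the axis, which is what condition (ii) of the global saddle definition actually needs. To close this, combine the graph structure from the stable manifold theorem with the symmetry: $f$ is a local $C^1$ diffeomorphism at $0$ (as $Df(0)$ is invertible), so for small $\delta$ the set $W^s_\delta=\{p: d(f^n(p),0)\le\delta \mbox{ for all } n\ge 0\}$ is a graph over the stable eigenspace, i.e.\ no two of its points share the same $x$-coordinate; but $W^s_\delta$ is $\kappa$-invariant, so a point $(x_0,y_0)\in W^s_\delta$ with $y_0\neq 0$ would force the distinct point $(x_0,-y_0)$ into $W^s_\delta$ over the same base point --- a contradiction. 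Hence $W^s_\delta$ lies in the $x$-axis, and the global stable set $\bigcup_n f^{-n}(W^s_\delta)$ lies in the ($f$-invariant) $x$-axis; the same argument applied to $f^{-1}$ (which is $C^1$ near $0$) handles $W^u$. With this inserted, the rest of your argument goes through as written.
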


In order to obtain a global saddle for $f$ with symmetry group either $\Z_{2}$ or $\Z_2(\langle\kappa\rangle)$,
we need the additional assumption that $f$ is a diffeomorphism, see \cite{SofisbeSaddle}.

\section{Strictly Local Dynamics} \label{secDenjoy}

Figure~\ref{Z4local} shows the local dynamics for maps equivariant under the action of groups
 that do not contain a reflection. These are $SO(2)$ and $\Z_n$. For these groups, local dynamics of attractor/repellor type does not necessarily extend to global dynamics, as we proceed to indicate.

We use examples referring to a local attractor, examples with a local repellor may be obtained considering $f^{-1}$.

\begin{figure}[hhh]
\sidecaption
a) \includegraphics[scale=0.25]{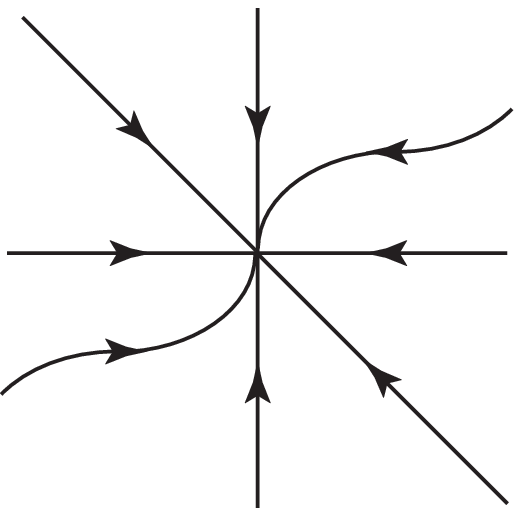}
\qquad
b) \includegraphics[scale=0.25]{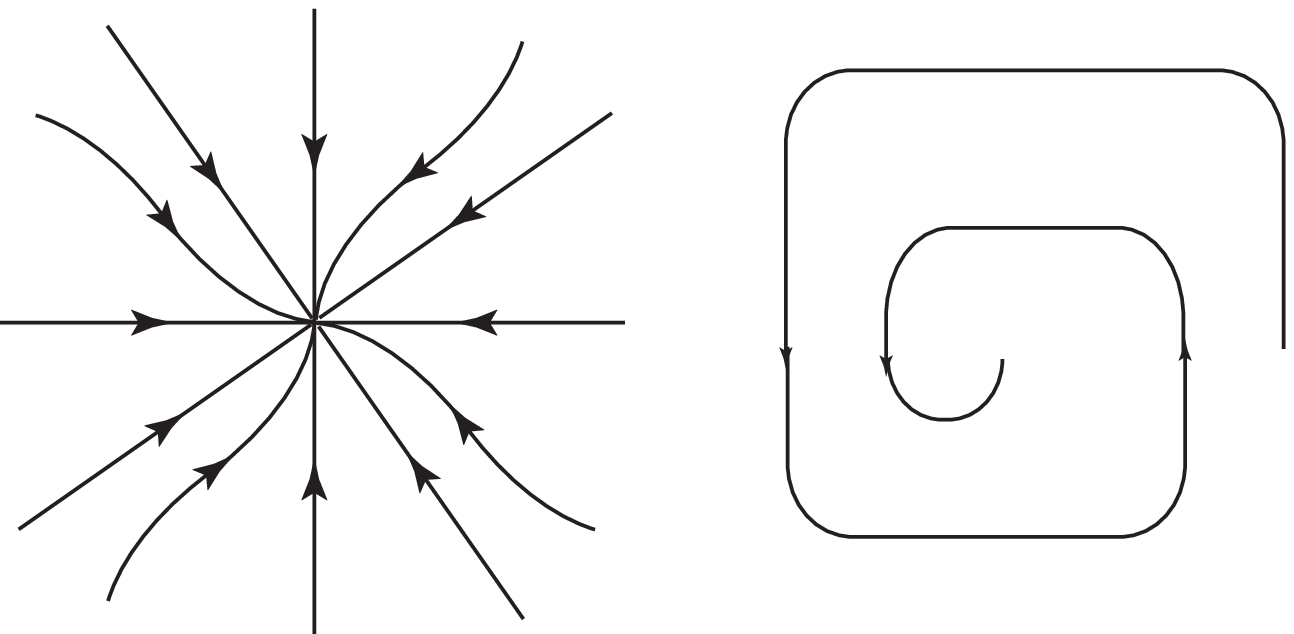}
\caption{
Local attractor with symmetry: a) $\Z_2$; b) $\Z_4$.
\label{Z4local}}
 \end{figure}

\bigbreak

The dynamics of  an $SO(2)$-symmetric embedding is mostly determined by  its radial component,
as  can be seen by writing $f$ in polar coordinates as $f(\rho, \theta)=(R(\rho, \theta), T(\rho, \theta))$.
It is easily shown that since $f$ is $SO(2)$-equivariant, the radial component  $R(\rho, \theta)$ only depends on $\rho$ and $R\in Emb(\R^+)$.
The fixed points of the radial component are invariant circles for $f$ hence knowledge about local dynamics does not contribute to the description of global dynamics unless $\Fix(R)= \{0\}$.

\bigbreak

The next two theorems show how a local attractor may be prevented from being a global attractor in a $\Z_n$-equivariant problem. Thus the examples in Figures~ \ref{Z4local} a) and b) may or may not extend to the whole plane.

\begin{theorem}[Theorem 3.1 in \cite{SofisbeSzlenk}] \label{exper} For each $n\ge 2$ there exists  $f:\R^2\to\R^2$ such
that:
\begin{enumerate}
\renewcommand{\theenumi}{\alph{enumi}}
\renewcommand{\labelenumi}{{\theenumi})}
\item\label{C1a} $f$ is a differentiable homeomorphism;
\item\label{C1b} $f$  has symmetry group $\Z_n$;
\item \label{C1E} $Fix(f)=\{0\}$;
\item\label{C1d} The origin is a local attractor;
\item \label{C3E} There exists a periodic orbit of minimal period $n$.
\end{enumerate}
\end{theorem}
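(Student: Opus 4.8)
The plan is to construct $f$ explicitly in polar coordinates $(\rho,\theta)$, $\rho\ge 0$, as a twisted product of a one-dimensional radial diffeomorphism with a radius-dependent angular rotation. Write $f(\rho,\theta)=(R(\rho,\theta),T(\rho,\theta))$; with the standard action $\zeta.z=e^{2\pi i/n}z$ the map $f$ is $\Z_n$-equivariant exactly when $R$ is $\frac{2\pi}{n}$-periodic in $\theta$ and $T(\rho,\theta)=\theta+S(\rho,\theta)$ with $S$ also $\frac{2\pi}{n}$-periodic in $\theta$. I would concentrate all radial behaviour in one function and all breaking of rotational symmetry in the twist: set $R(\rho,\theta)=g(\rho)$ where $g\colon[0,\infty)\to[0,\infty)$ is a $C^{\infty}$ increasing bijection that equals $\mu\rho$ near $0$ for some $\mu\in(0,1)$, is linear near $\infty$, and has a \emph{unique} positive fixed point $\rho_0$ (so $g$ is genuinely nonlinear); and set $T(\rho,\theta)=\theta+S(\rho,\theta)$ where $S$ is $C^{\infty}$, $\frac{2\pi}{n}$-periodic in $\theta$, equal to a fixed constant $\beta_0\in(0,\pi)$ for all $\rho$ outside a compact annulus, satisfies $0<S<2\pi$ and $1+\partial_\theta S>0$ everywhere, and has $S(\rho_0,\cdot)$ non-constant, of least period exactly $\frac{2\pi}{n}$, and equal to $\frac{2\pi}{n}$ at some angle $\theta_0$. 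For $n\ge 3$ one may simply take $\beta_0=\frac{2\pi}{n}$ and $S(\rho,\theta)=\frac{2\pi}{n}+b\,\chi(\rho)\sin(n\theta)$, with $\chi$ a small bump supported in the annulus and equal to $1$ at $\rho_0$, $|b|$ small and $\theta_0=0$; for $n=2$ take $\beta_0=\frac{\pi}{2}$ and arrange instead that $S(\rho_0,\cdot)$ reaches $\pi$. In Cartesian coordinates $f(z)=\frac{g(|z|)}{|z|}\,e^{iS(|z|,\arg z)}\,z$ for $z\ne 0$, $f(0)=0$; this is well defined since $S$ is $2\pi$-periodic in $\theta$, and near $0$ it is the linear map $z\mapsto\mu e^{i\beta_0}z$, so $f$ is $C^{\infty}$.

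Granting such an $f$, four of the five properties are immediate. For each fixed $\rho$ the map $\theta\mapsto\theta+S(\rho,\theta)$ is an orientation-preserving circle homeomorphism (as $1+\partial_\theta S>0$) and $g$ is a bijection of $[0,\infty)$, so $f$ is a bijection of $\R^2$; since $Df$ is invertible everywhere (it is a local diffeomorphism off the origin by the polar-coordinate expression, and $\det Df(0)=\mu^{2}>0$), $f$ is a diffeomorphism, in particular $f\in\Hom(\R^2)$, which gives (\ref{C1a}). A nonzero fixed point would force $S\equiv 0\pmod{2\pi}$, impossible since $0<S<2\pi$, so $\Fix(f)=\{0\}$, giving (\ref{C1E}). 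Since $|f(z)|=\mu|z|$ for $z$ near $0$, a whole neighbourhood of $0$ lies in the basin of $0$, so $0$ is a local attractor; this is (\ref{C1d}) (note $Df(0)=\mu e^{i\beta_0}$ has eigenvalues of modulus $\mu<1$). Finally, the $\frac{2\pi}{n}$-periodicity of $S$ in $\theta$ gives $f(\rho_0,\theta_0)=(\rho_0,\theta_0+\frac{2\pi}{n})$, i.e. $f(z_0)=\zeta z_0$ with $z_0=\rho_0e^{i\theta_0}\ne 0$; by $\Z_n$-equivariance $f^k(z_0)=\zeta^k z_0$, so $f^n(z_0)=z_0$ while $z_0,\zeta z_0,\dots,\zeta^{n-1}z_0$ are pairwise distinct, whence $z_0$ has minimal period $n$, giving (\ref{C3E}).

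The remaining point, (\ref{C1b}), is the crux: one must show $\Gamma_f$ equals $\Z_n$ \emph{exactly}, not some larger closed subgroup of $O(2)$. Certainly $\Z_n\subseteq\Gamma_f$. Conversely, if $\gamma\in GL(2,\R)$ satisfies $f(\gamma z)=\gamma f(z)$, then differentiating at $0$ (Lemma~\ref{LemmaJacobian}) shows $\gamma$ commutes with $Df(0)=\mu e^{i\beta_0}$; since $\beta_0\in(0,\pi)$ this is not a real scalar matrix, so its commutant in $GL(2,\R)$ consists exactly of the matrices representing multiplication by a nonzero complex number, and thus $\gamma$ is such a multiplication by some $c\ne 0$ (in particular no symmetry reverses orientation). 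Comparing moduli in $f(cz)=cf(z)$ yields $g(|c|\rho)=|c|\,g(\rho)$ for all $\rho$; taking $\rho=\rho_0$ shows $|c|\rho_0$ is a fixed point of $g$, so $|c|=1$ by uniqueness of $\rho_0$. Writing $c=e^{i\psi_0}$ and comparing arguments gives $S(\rho,\psi_0+\theta)\equiv S(\rho,\theta)\pmod{2\pi}$, hence, by continuity and $0<S<2\pi$, $S(\rho_0,\psi_0+\theta)=S(\rho_0,\theta)$ for all $\theta$; as $S(\rho_0,\cdot)$ has least period exactly $\frac{2\pi}{n}$, this forces $\psi_0\in\frac{2\pi}{n}\Z$, i.e. $c\in\langle\zeta\rangle$. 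Hence $\Gamma_f=\Z_n$. The real difficulty is not any single step but reconciling the five requirements simultaneously: exact $\Z_n$-symmetry excludes linear maps and rotational symmetry, while being a global homeomorphism is easily destroyed by nonlinear $\Z_n$-equivariant maps; the skew-product structure over a radial homeomorphism, with the $SO(2)$-breaking confined to a bounded annulus, is what makes all five compatible, and the uniqueness of the positive fixed point of $g$ together with the exact $n$-fold $\theta$-dependence of $S$ pins $\Gamma_f$ down to $\Z_n$.
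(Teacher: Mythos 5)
Your construction is correct, but it follows a genuinely different route from the proof of Theorem 3.1 in \cite{SofisbeSzlenk} that the paper sketches. The paper starts from Szlenk's explicit $\Z_4$-equivariant example of \cite{Cima-Manosa}, in which each quadrant is invariant, deforms the first quadrant onto a sector of angle $2\pi/n$ and then propagates by the $\Z_n$-action; there the local attractor and the periodic orbit are inherited from Szlenk's map, and essentially all of the difficulty is concentrated in showing that the glued map is still a differentiable homeomorphism across the sector boundaries. Your polar skew-product $f(\rho,\theta)=(g(\rho),\theta+S(\rho,\theta))$ makes smoothness and bijectivity essentially free, manufactures the period-$n$ orbit on the invariant circle $\rho=\rho_0$ by forcing the twist to equal $2\pi/n$ there, and shifts the work to the verification that $\Gamma_f$ is \emph{exactly} $\Z_n$ --- which you handle correctly via the commutant of $Df(0)=\mu e^{i\beta_0}$ (not a real scalar since $\beta_0\in(0,\pi)$), the uniqueness of the positive fixed point of $g$, and the least period of $S(\rho_0,\cdot)$. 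Both arguments prove the statement as given; what the paper's route buys in addition is that its examples inherit the finer features of Szlenk's map that are relevant to the DMYQ context (the pointwise Jacobian spectrum), which your example is not designed to satisfy since necessarily $g'(\rho_0)\ge 1$ --- but none of that is among the five listed conditions. One small repair for $n=2$: with $\beta_0=\pi/2$ the sinusoidal bump needed for $S(\rho_0,\cdot)$ to reach $\pi$ has amplitude $\pi/2$, and its steepest descent violates $1+\partial_\theta S>0$; either use an asymmetric $\pi$-periodic profile (fast rise, slow fall with slope greater than $-1$) or take $\beta_0=\pi-\varepsilon$ with a bump of amplitude $\varepsilon$, which keeps $\beta_0\in(0,\pi)$ so the commutant argument still applies.
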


The idea of the proof is to start with a $\Z_4$-equivariant example due to Szlenk (see \cite{Cima-Manosa}), sharing the same properties.
Each quadrant of the plane is invariant under the map $f_4$ of this example.
We deform the first quadrant into a sector of the plane, of angle $2\pi/n$ and then use the $\Z_n$ symmetry to cover the rest of the plane, as illustrated in Figure~\ref{FigSlzenkZn}.
The main difficulty is to prove that the result is a differentiable homeomorphism.

\begin{figure}
\sidecaption
\includegraphics[scale=.50] {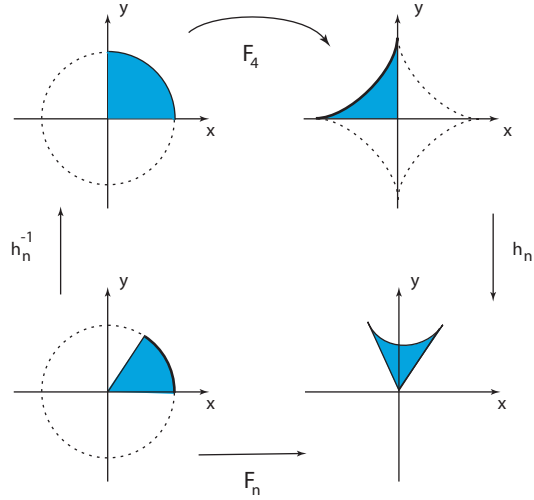}
\caption{Construction of the $\Z_n$-equivariant example $F_n$ in a fundamental domain of the
$\Z_n$-action, shown here for $n=6$.\label{FigSlzenkZn}}
\end{figure}

The $\Z_m$-equivariant homeomorphisms constructed in Theorem \ref{exper} have rotation number
$1/m$. So we might be led to think that the presence of the $\Z_m$-symmetry
implies that the rotation number of the homeomorphism should be
rational. One consequence would be that the asymptotically stable fixed point
is a global attractor if and only if there are no periodic points different from the fixed point.

The next result shows that this is false. We prove in \cite{AlarconDenjoy} the existence of $\Z_m$-equivariant and dissipative homeomorphisms in the plane with an asymptotically stable fixed point such that the induced map in the space of prime ends is conjugated to a
Denjoy map, which is also $\Z_m$-equivariant.
The idea is to construct $\Z_m$-equivariant Denjoy maps in the circle and then, in
the context of symmetry, to reproduce the construction used to prove the following:

\begin{proposition}[Proposition 2 in \cite{irrationalRotNumer}] Given $w \in (0,1)\setminus \Q$ and a Denjoy map $\phi$, there exists an admissible map $f$ with rotation number $\bar{w}$ and such that $f^{*}$
is topologically conjugated to $\phi$.
\end{proposition}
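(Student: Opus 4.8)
The plan is to construct $f$ so that the circle homeomorphism it induces on prime ends is a topological copy of $\phi$; the rotation number is then automatic. Indeed $\rho(f)$ is, by definition, the rotation number of $f^{*}$, and rotation number is invariant under topological conjugacy, so $f^{*}\cong\phi$ gives $\rho(f)=\rho(\phi)=\bar w$ --- a Denjoy map carries an irrational rotation number, which here must be $\bar w$, for otherwise $f^{*}\cong\phi$ is impossible. Hence everything reduces to realising $\phi$, up to conjugacy, as the prime end homeomorphism of an admissible map.

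I would work on the sphere $\mathbf{S}^2=\R^2\cup\{\infty\}$ and seek an orientation preserving homeomorphism $f$ with an attracting fixed point $p$, with $f(\infty)=\infty$ and $\infty$ asymptotically stable for $f^{-1}$ (so that $f|_{\R^2}$ is dissipative), and with basin of attraction $U$ of $p$ a proper, unbounded, simply connected domain; then $\mathbf{P}(U)$ is a topological circle and the goal is $f^{*}\cong\phi$. Two features force the construction. First, $U$ being unbounded forces $\infty\in\partial U$, so the complementary continuum $K=\mathbf{S}^2\setminus U$ must be non-locally-connected near the fixed point $\infty$ --- $\infty$ cannot be accessible from $U$ through a single prime end --- since $f$ fixes $\infty$ whereas the Denjoy map $\phi$ has no periodic point. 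Second, $\phi$ itself is a blow-up of the rigid rotation: there is a monotone degree one semiconjugacy $h$ with $h\circ\phi=R_{w}\circ h$, $R_{w}$ the rotation by $w$, collapsing each wandering interval of $\phi$ to a point, so $\phi$ is $R_{w}$ with a countable dense union of orbits blown up. The construction mirrors this on the plane: start from a model domain $U_{0}$, the complement of a suitable spiral-like $R_{w}$-invariant continuum, whose prime end map is $R_{w}$ and for which $\infty$ already sits on the boundary in the wild way required; then ``thicken'' the boundary by inserting, along the orbit of $R_{w}$-points collapsed by $h$, a sequence of channels or fins, one for each wandering interval of $\phi$, permuted by the dynamics exactly as $\phi$ permutes its wandering intervals, and with sizes and positions controlled so that they are pairwise disjoint and accumulate precisely on the Denjoy Cantor set. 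The resulting $U$ is again simply connected, unbounded and proper; its prime end compactification is a circle on which each inserted fin contributes one blown-up interval while the remaining prime ends carry the minimal, fixed-point-free dynamics over the Cantor set; hence $f^{*}\cong\phi$.

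Admissibility of $f$ is then read off: orientation preservation and the attracting fixed point $p$ are built in; $U$ is proper because $p$ is not a global attractor; dissipativity, i.e.\ $\infty$ asymptotically stable for $f^{-1}$, is arranged by letting the backward dynamics push the plane to $\infty$, and --- as noted in the remark following the definition of an admissible homeomorphism --- taking $f$ area contracting makes the unboundedness of the forward invariant set $U$ automatic. Combined with $f^{*}\cong\phi$, this yields $\rho(f)=\bar w$.

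The hard part, as always in such realisations, is the thickening step --- the analogue of the ``main difficulty'' flagged for Theorem~\ref{exper}. One has to prove that the object assembled from the model and the inserted fins really is a homeomorphism of $\R^2$, that the space of prime ends of its basin is \emph{exactly} a topological circle, and that the induced map on it is \emph{exactly} $\phi$: the wandering intervals, the Cantor set, the cyclic order and the dynamics must all match, with no spurious prime ends or extra identifications, and the fixed point $\infty$ must be seated on $\partial U$ consistently with the fixed-point-freeness of $f^{*}$. This demands delicate control of the sizes, shapes and placement of the fins and of the way the dynamics permutes them; by contrast the two end reductions --- realisation $\Rightarrow$ rotation number, and model $\Rightarrow$ admissible map --- are routine.
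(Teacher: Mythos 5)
You should first be aware that this survey does not prove the proposition at all: it is imported verbatim as Proposition~2 of the cited reference of Hern\'andez-Corbato, Ortega and Ruiz del Portal, and the text around it only explains that the construction behind it is later redone equivariantly in \cite{AlarconDenjoy}. So there is no in-paper proof to match your argument against; your proposal has to stand on its own, and in its present form it does not. Your two ``routine'' reductions are fine: the rotation number of $f$ is by definition that of $f^{*}$, conjugacy-invariance then forces $\rho(f)=\rho(\phi)=\bar w$ (with the implicit reading that $\phi$ has rotation number $w$, which you correctly flag as necessary for the statement to be consistent), and the observation that $\infty$ must lie on $\partial U$ without being accessible through a finite invariant set of prime ends is a correct and relevant constraint. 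The strategy --- realise $\phi$ as a blow-up of a rigid-rotation model by inserting channels indexed by the wandering intervals --- is also the right kind of idea and is in the spirit of what the cited authors actually do.

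The genuine gap is that everything of substance is deferred. First, your base model $U_{0}$, an unbounded proper simply connected basin whose prime end homeomorphism is conjugate to the irrational rotation $R_{w}$ and whose boundary already contains $\infty$ in the required inaccessible way, is itself a nontrivial realisation theorem of exactly the same difficulty as the one you are proving; you assume it without construction. Second, the ``thickening'' step is described only in metaphor (``fins'', ``channels'', ``sizes and positions controlled''): no explicit continuum is exhibited, no homeomorphism of $\R^{2}$ permuting the inserted pieces is defined, and no argument is given that the prime end circle of the resulting domain has exactly one blown-up interval per wandering interval of $\phi$, no spurious prime ends, the correct cyclic order, and the correct induced dynamics. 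You acknowledge this yourself in the final paragraph, but acknowledging that the hard part is hard does not discharge it --- as written, the proposal is a plan for a proof, with the entire content of the proposition (the existence and verification of the planar realisation) still to be supplied.
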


Observe that two admissible homeomorphisms $f_1, f_2$ with the same basin of
attraction $U$ verify that $$(f_1 \circ f_2)^*=f_1^*\circ f_2^*.$$

Let $f$ be an admissible homeomorphisms with basin of attraction U.
Suppose $f$ is $Z_m$-equivariant and $U$ is also invariant by $R_{\frac{1}{m}}$. Hence, the following holds: $$f^*\circ
R_{\frac{1}{m}}^*=R_{\frac{1}{m}}^*\circ f^*.$$

Since $R_{\frac{1}{m}}^*$ is a periodic homeomorphism of
$\T^1$ with rotation number $1/m$, then
$R_{\frac{1}{m}}^*$ is conjugated to the linear rotation
$R_{\frac{1}{m}}$ and $f^*$ is said to be $\Z_m$-\emph{equivariant in
the space of prime ends}.

\begin{theorem}[Theorem 4.2 in \cite{AlarconDenjoy}] \label{teoDenjoyZm} Given an irrational number $\tau\notin \Q$,
there exists a $\Z_m-$equivariant and admissible homeomorphism in
$\R^2$ with rotation number $\bar \tau \in \T$  and such that
induces a Denjoy map in the circle of prime ends which is also
$\Z_m-$equivariant.
\end{theorem}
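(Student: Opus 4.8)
The plan is to realize the required plane homeomorphism in two stages, as indicated in the discussion preceding the statement: first build a $\Z_m$-equivariant Denjoy homeomorphism of the circle with rotation number $\tau$, and then run a symmetry-adapted version of the prime-end realization of \cite[Proposition 2]{irrationalRotNumer}. Throughout we may take the representative $\tau\in(0,1)$; since $\tau\notin\Q$, also $m\tau\notin\Q$, so $\{m\tau\}$ (its fractional part) is irrational and $\lfloor m\tau\rfloor\in\{0,\dots,m-1\}$.

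\emph{Stage 1 (equivariant Denjoy map).} Let $\sigma=R_{1/m}$ be the rotation of $\T=\R/\Z$ by $1/m$, and let $\pi:\T\to\T'$, $\pi(x)=mx\bmod 1$, be the quotient by $\langle\sigma\rangle$, which is again a circle. I would first pick a classical Denjoy homeomorphism $\bar\phi$ of $\T'$ with irrational rotation number $\{m\tau\}$, obtained by the usual surgery on a dense orbit of the rigid rotation (blow up the orbit by inserting a summable family of gap intervals and rescale). Then lift $\bar\phi$ through the $m$-fold cover $\pi$: if $\tilde{\bar\phi}:\R\to\R$ denotes a lift of $\bar\phi$ with $\tilde{\bar\phi}(t+1)=\tilde{\bar\phi}(t)+1$, the maps $\phi_k(x)=\frac1m\tilde{\bar\phi}(mx)+\frac km\bmod 1$, $k=0,\dots,m-1$, are orientation-preserving homeomorphisms of $\T$ covering $\bar\phi$; each satisfies $\phi_k(x+1/m)=\phi_k(x)+1/m$, hence commutes with $\sigma$, and a short computation (using $\tilde\phi_k^{\,n}(x)=\frac1m\tilde{\bar\phi}^{\,n}(mx)+\frac{nk}m$) gives $\rho(\phi_k)=\frac{\rho(\bar\phi)+k}{m}\bmod 1$. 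Choosing $k=\lfloor m\tau\rfloor$ yields $\phi:=\phi_k$, a $\Z_m$-equivariant circle homeomorphism with $\rho(\phi)=\tau$; since $\phi$ covers a map with wandering intervals it has wandering intervals, so it is a Denjoy map. (Equivalently one could perform the Denjoy surgery directly on $\T$, blowing up the $\sigma$-saturation of an orbit of $R_\tau$ with $\sigma$-invariant gap lengths.)

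\emph{Stage 2 (equivariant prime-end realization).} Next I would reproduce the construction behind \cite[Proposition 2]{irrationalRotNumer}, which from a Denjoy map and a number $w\in(0,1)\setminus\Q$ produces an admissible homeomorphism $f$ with $\rho(f)=\bar w$ and $f^*$ topologically conjugate to the Denjoy map, but now carried out with $w=\tau$ and with the $\Z_m$-equivariant $\phi$ of Stage~1, tracking the symmetry at every step. The basin $U$ in that construction is built over the prime-end circle, which here inherits the $\Z_m$-action coming from $\sigma$; because $\phi$ commutes with this action, the pieces of the construction defined over fundamental domains of the circle and of its blow-up glue $\Z_m$-equivariantly, giving a basin $U$ invariant under $R_{1/m}$ and an admissible $f\in\Hom(\R^2)$ that commutes with $R_{1/m}$, with $f(U)=U$ and $f^*$ conjugate to $\phi$. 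By construction $f$ is dissipative (indeed area-contracting) with an asymptotically stable fixed point whose basin $U$ is proper and unbounded, so $f$ is admissible, and $\rho(f)=\rho(f^*)=\rho(\phi)=\tau$. Finally $R_{1/m}|_U$ induces on $\mathbf{P}(U)$ a periodic homeomorphism of rotation number $1/m$, hence $R_{1/m}^*$ is conjugate to the linear rotation; by the commutation $f^*\circ R_{1/m}^*=R_{1/m}^*\circ f^*$ this makes $f^*$ $\Z_m$-equivariant in the space of prime ends, and since $f^*\cong\phi$ is a Denjoy map, $f^*$ induces the desired $\Z_m$-equivariant Denjoy map on the circle of prime ends.

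\emph{Expected main difficulty.} Stage~1 is essentially classical Denjoy theory, and the cited realization result does the analytic heavy lifting in the non-symmetric case; the real obstacle is making Stage~2 genuinely equivariant. One must arrange that the \emph{whole} plane map $f$, not just its germ near $\partial U$, commutes with $R_{1/m}$, that the basin $U$ is exactly $R_{1/m}$-invariant, and that $R_{1/m}$ acts on $U$ tamely enough for $R_{1/m}^*$ to be conjugate to a rigid rotation of the disk, so that ``$\Z_m$-equivariant in prime ends'' holds in the strong sense defined above — all while simultaneously keeping the dissipativity/area-contraction needed for admissibility and reproducing exactly the prescribed Denjoy dynamics on $\mathbf{P}(U)$. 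Coordinating these three requirements is where the argument is delicate; everything else is routine.
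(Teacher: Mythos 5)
Your two-stage plan — first building a $\Z_m$-equivariant Denjoy circle homeomorphism with rotation number $\tau$, then rerunning the prime-end realization of \cite[Proposition 2]{irrationalRotNumer} equivariantly so that the basin and the induced map on prime ends inherit the symmetry — is exactly the strategy the paper indicates for this theorem, and your explicit lift $\phi_k(x)=\frac1m\tilde{\bar\phi}(mx)+\frac km$ with $k=\lfloor m\tau\rfloor$ correctly produces the equivariant Denjoy map with $\rho(\phi)=\tau$. The proposal is correct and essentially coincides with the paper's own (sketched) argument, supplying somewhat more detail on the circle-map stage.
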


 Hence, \cite{AlarconDenjoy} shows that for $\Z_m$-equivariant homeomorphisms one cannot guarantee that the rotation number is rational and proves the existence of $\Z_m$-equivariant homeomorphisms
with some complicated and interesting dynamical features.

\begin{acknowledgement}The research of all authors at Centro de Matem\'atica da Universidade do Porto (CMUP)
 had financial support from
 the European Regional Development Fund through the programme COMPETE and
 from  the Portuguese Government through the Funda\c c\~ao para
a Ci\^encia e a Tecnologia (FCT) under the project PEst-C/MAT/UI0144/2011.
B.\ Alarc\'{o}n was also supported by grant MICINN-12-MTM2011-22956 of the Ministerio de Ciencia e
Innovaci\'on (Spain).
\end{acknowledgement}

%
%
%

\end{document}